\definecolor{green}{RGB}{0,144,0}
\definecolor{bluegreen}{RGB}{17,100,180}
\numberwithin{equation}{section}
\numberwithin{figure}{section}
\numberwithin{table}{section}
\definecolor{codegreen}{rgb}{0,0.6,0}
\definecolor{codegray}{rgb}{0.5,0.5,0.5}
\definecolor{codepurple}{rgb}{0.58,0,0.82}
\definecolor{backcolour}{rgb}{0.95,0.95,0.92}
\definecolor{flame}{rgb}{0.89, 0.35, 0.13}
\newtheorem{theorem}{Theorem}[section]
\newtheorem*{theorem*}{Theorem}
\newtheorem{lemma}[theorem]{Lemma}
\newtheorem{proposition}[theorem]{Proposition}
\newtheorem{claim}[theorem]{Claim}
\newtheorem{question}[theorem]{Question}
\newtheorem{remark}[theorem]{Remark}
\newcommand{\teichmuller}{Teichm{\"u}ller{ }}
\newcommand{\calH}{\mathcal{H}}
\newcommand{\calQ}{\mathcal{Q}}
\newcommand{\hyp}{\mathcal{H}^{hyp}}
\newcommand{\Qhyp}{\mathcal{Q}^{hyp}}
\newcommand{\T}{\mathcal{T}}
\newcommand{\C}{\mathcal{C}}
\newcommand{\D}{\mathcal{D}}
\newcommand{\Cbb}{\mathbb{C}}
\newcommand{\R}{\mathbb{R}}
\newcommand{\Hbb}{\mathbb{H}}
\newcommand{\sys}{\text{sys}}
\newcommand{\Mod}{\text{Mod}}
\newcommand{\SL}{\text{SL}}
\newcommand{\SO}{\text{SO}}
\newcommand{\bs}{\boldsymbol}
\title{Meanders, hyperelliptic pillowcase covers, and the Johnson filtration}
\author{Luke Jeffreys}
\address{School of Mathematics , University of Bristol, Fry Building, Woodland Road, Bristol BS8 1UG}
\curraddr{}
\email{luke.jeffreys@bristol.ac.uk}
\thanks{The author is thankful for support from the Heilbronn Institute for Mathematical Research while this work was undertaken. The author is currently a Leverhulme Early Career Fellow (ECF-2023-553) and so also wishes to thank the Leverhulme Trust.}
\subjclass[2020]{Primary: 32G15, 30F30, 30F60. Secondary: 57M50.}
\begin{document}

\begin{abstract}
We provide minimal constructions of meanders with particular combinatorics. Using these meanders, we give minimal constructions of hyperelliptic pillowcase covers with a single horizontal cylinder and simultaneously a single vertical cylinder so that one or both of the core curves are separating curves on the underlying surface. In the case where both of the core curves are separating, we use these surfaces in a construction of Aougab-Taylor in order to prove that for any hyperelliptic connected component of the moduli space of quadratic differentials with no poles there exist ratio-optimising pseudo-Anosovs lying arbitrarily deep in the Johnson filtration and stabilising the Teichm{\"u}ller disk of a quadratic differential lying in this connected component.
\end{abstract}

\maketitle


\section{Introduction}

Meanders are special planar curve and arc systems that have been historically well-studied in many areas of mathematics, computer science and physics. Indeed, it is possible to argue that they were studied as early as 1912 by Poincaré in his investigations of certain annular diffeomorphisms~\cite{P}. More recently, they have been studied by Arnol'd (who introduced the term meander) in the setting of algebraic geometry~\cite{Arn}, and by Ko-Smolinsky in the study of 3-manifold invariants~\cite{Ko-S}. In computer science, they are related to the study of Jordan sequences, while in physics they are relevant to the study of compact polymer folding, the Temperley-Lieb algebra, matrix models, and (2+1)-dimensional gravity. Moreover, their enumeration still proves to be a challenging open problem. We direct the reader to the works of Lando-Zvonkin~\cite{LZ}, Di Francesco-Golinelli-Guitter~\cite{DiFGG1,DiFGG2}, and Jensen~\cite{Jen} for more details.

For the purposes of the current work, we are interested in the fact that meanders can be viewed as filling pairs on the punctured sphere. Specifically, a pair of non-homotopically trivial simple closed curves on a surface, minimising their intersections up to homotopy, are said to be a filling pair if their complement is a disjoint union of at most once-punctured disks. One can extend this definition to the notion of a filling set of curves (a pair being the case where the set is of size two). Each intersection between the curves in a filling set is 4-valent and so, since the complementary regions are at most-once punctured disks, the dual complex on the surface is a tiling of the surface by squares. Realising each square as a unit Euclidean square, we realise the surface itself as a pillowcase cover - that is, as a branched cover of the four-times punctured sphere (the pillowcase). Equivalently, we equip the surface with a meromorphic quadratic differential that is a cover of a quadratic differential on the pillowcase. So meanders, being filling pairs on the sphere, give rise to pillowcase covers of genus zero. In fact, coming specifically from filling pairs, these are examples of a type of pillowcase cover called a $[1,1]$-pillowcase cover. A $[1,1]$-pillowcase cover is a pillowcase cover that simultaneously has a single horizontal cylinder (coming from one of the curves in the filling pair) and a single vertical cylinder (coming from the other curve in the filling pair). See Section~\ref{s:QDs} for more details and definitions.

The moduli space of integrable meromorphic quadratic differentials on a closed Riemann surface of genus $g\geq 0$ is a disjoint union
\[\calH_{g}\sqcup\calQ_{g},\]
where $\calH_{g}$ is the subset of those quadratic differentials that are global squares of Abelian differentials. Note that $\calH_{g}$ is empty in genus zero. Again, we direct the reader to Section~\ref{s:QDs} for more details - we include only enough here to be able to state our main results. Both components in this union are themselves disjoint unions of subsets called strata, where the disjoint union is parameterised by the orders of the zeros of the quadratic differential:
\[\calH_{g}=\bigsqcup_{\substack{k_{1}+\cdots+k_{n} = 2g-2\\k_{i}\geq 1}}\calH(k_{1},\ldots,k_{n}),\]
and
\[Q_{g}=\bigsqcup_{\substack{k_{1}+\cdots+k_{n} = 4g-4\\k_{i}\in\{-1\}\cup\{1,2,\ldots\}}}\calQ(k_{1},\ldots,k_{n}).\]
Moreover, the strata themselves can be disconnected. The classification of the connected components of strata was carried out by Kontsevich-Zorich~\cite{KZ} in the case of $\calH_{g}$, and by Lanneau~\cite{Lan1,Lan2} and Chen-M{\"o}ller~\cite{CM} in the case of $Q_{g}$.

Given a connected component of a stratum of the moduli space, the pillowcase covers can be thought of as the rational points in a particular coordinate system. As such, pillowcase covers play a key role in the study of certain properties of these components. For example, the pillowcase covers in the connected components of $\calH_{g}$, often called square-tiled surfaces in this context, were counted in the works of Eskin-Okounkov~\cite{EO} and Zorich~\cite{Z1} in order to determine the volumes of the components (with respect to the Masur-Veech measure). Using the correspondence between meanders and genus zero pillowcase covers discussed above, the enumeration of meanders was utilised by Delecroix-Goujard-Zograf-Zorich~\cite{DGZZ} to calculate the volumes of strata of the moduli space of genus zero quadratic differentials.

In many cases, restricting the combinatorics of the pillowcase covers that one is interested in can be useful. An important piece of combinatorial data is the number of maximally embedded flat annuli (called cylinders) in the horizontal and vertical directions. Arguably then, the combinatorially simplest are those said to be $[1,1]$-pillowcase covers. These are those pillowcase covers that have simultaneously a single horizontal cylinder and a single vertical cylinder. Equivalently, these are the pillowcase covers obtained from filling pairs via the dual square-tiling discussed above. A natural question to ask is, given a connected component of the moduli space of quadratic differentials, what is the minimum number of squares required to construct a $[1,1]$-pillowcase cover in that connected component. When both of the core curves of the cylinders are non-separating curves, this question was answered by the author in $\calH_{g}$~\cite{J1} -- where having both curves be non-separating is forced upon us -- and also in certain components of $\calQ_{g}$ in his thesis~\cite{J0} and a separate work~\cite{J2}. As the non-separating condition on the core curves is not forced upon us in $\calQ_{g}$, it is natural to ask the following question:

\begin{question}[A variant of {\cite[Question 7.2]{J0}}]
What is the minimum number of squares required to construct a $[1,1]$-pillowcase cover in a connected component of $\calQ_{g}$ where one or both of the core curves are separating?
\end{question}

In this paper, we will answer this question for the hyperelliptic connected components.

The notion of hyperellipticity is used in the classifications of Kontsevich-Zorich, and Lanneau and Chen-M{\"o}ller. Indeed, it was shown that the strata $\calH(2g-2),\calH(g-1,g-1),\calQ(4j+2,4k+2),\calQ(4j+2,2k-1,2k-1)$, and $\calQ(2j-1,2j-1,2k-1,2k-1)$ all have connected components consisting entirely of hyperelliptic quadratic differentials -- differentials that are specific double covers of quadratic differentials of genus zero. These connected components are called the hyperelliptic components and are denoted by $\hyp(2g-2),\hyp(g-1,g-1),\Qhyp(4j+2,4k+2),\Qhyp(4j+2,2k-1,2k-1)$, and $\Qhyp(2j-1,2j-1,2k-1,2k-1)$, respectively.

The main result of this paper is then the following.

\begin{theorem}\label{th:main}
The minimum number of squares required to produce $[1,1]$-pillowcase covers in the hyperelliptic components of the strata of the moduli space of quadratic differentials are as in Table~\ref{tab:theorem}.
\end{theorem}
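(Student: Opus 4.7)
My plan is to compose the two correspondences highlighted in the introduction. First, $[1,1]$-pillowcase covers arise as the surfaces dual to the square-tilings of filling pairs, and on the sphere these are exactly the meanders. Second, a hyperelliptic quadratic differential is, by definition, a double cover of a meromorphic quadratic differential on $\CP^1$ branched at prescribed singularities, and this cover doubles the number of unit squares. Composing the two, each hyperelliptic $[1,1]$-pillowcase cover on a surface of genus $g$ is the double cover of a meander-differential on $S^2$ whose branch locus is dictated by the hyperellipticity data, and the total number of squares is exactly twice the number of crossings of the base meander. Thus the theorem reduces to determining the minimum crossing number of meanders on $S^2$ with prescribed zero/pole profile and prescribed separating or non-separating behaviour of the lifts of the two core curves.

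First I would translate each target stratum in Table~\ref{tab:theorem} into a combinatorial template for the base meander. The orders $k_i$ of the zeros of the hyperelliptic differential, together with the Riemann--Hurwitz contribution of the hyperelliptic involution, dictate which intersections of the meander and which complementary regions must carry prescribed zeros or simple poles of the base quadratic differential on $S^2$. For the cases $\hyp(2g-2)$ and $\hyp(g-1,g-1)$ both lifted curves are forced to be non-separating, so the relevant minima are already covered by the author's work~\cite{J1}; likewise the non-separating cases in the three families $\Qhyp(4j+2,4k+2)$, $\Qhyp(4j+2,2k-1,2k-1)$ and $\Qhyp(2j-1,2j-1,2k-1,2k-1)$ are handled by~\cite{J2}. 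The genuinely new content is the restriction to meanders in which one or both of the core curves lift to separating curves on the double cover.

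For the lower bounds I would combine three ingredients: an Euler characteristic count on $S^2$ bounding below the number of complementary regions and therefore of crossings; a multiplicity count using that an intersection of valence $4m$ or a complementary region with $2\ell$ sides produces a singularity that must match the prescribed $k_i$ after double covering; and a $\Z/2$ homological constraint, namely that one core curve of the base meander lifts to a separating curve precisely when the \emph{other} core curve meets the hyperelliptic branch set in an even number of points. This last ingredient is what accounts for the increase in the minimum in the separating cases: it imposes a parity condition that obstructs the most economical configurations available in the non-separating setting and typically forces at least two or four additional squares.

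The matching upper bounds require explicit constructions, and this is where I anticipate the \textbf{main obstacle}. For each family of hyperelliptic strata, and each separating or non-separating sub-case, I would build a combinatorial scheme of meanders, parameterised by $j$ and $k$, that realises the lower bound for all admissible values. The approach is to exhibit small base meanders at the minimal parameter values, then splice in local gadgets that increase $j$ or $k$ by one while adding only the minimum number of crossings forced by the multiplicity count. The delicate point is that every splicing step must be compatible with the $\Z/2$ parity condition governing the separating behaviour of the lifts; this is likely to require several different splicing gadgets depending on the parities of $j$ and $k$, and in turn several distinct meander families per stratum, each verified by tracing the effect of the construction on the branch set and on the homology class of the lifted core curves.
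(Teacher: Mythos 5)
Your overall strategy --- descend the filling pair to the punctured sphere via the hyperelliptic involution, minimise the crossings of the resulting planar configuration in the prescribed genus-zero stratum, and double --- is exactly the route the paper takes, and you correctly locate the bulk of the work in the explicit minimal constructions. But the reduction step, as you state it, has a genuine gap. The criterion you give for separating lifts ("one core curve lifts to a separating curve precisely when the \emph{other} core curve meets the branch set in an even number of points") is not the right one, and as literally stated it is vacuous: a core curve meets the branch set either in its two endpoints or not at all, so the parity is always even. The correct statement (Lemma~\ref{l:lift}) is intrinsic to the curve itself: a closed curve on the $(2g+2)$-punctured sphere lifts to a symmetric separating curve if and only if it encloses an odd number of punctures, and --- crucially --- a symmetric \emph{non-separating} core curve does not descend to a closed curve at all, but passes through two Weierstrass points and descends to an \emph{arc}. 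This dichotomy is what produces the paper's taxonomy of base objects: a doubly-anchored open meander, a singly-anchored open meander, or a closed meander according as $0$, $1$, or $2$ of the core curves are separating. Your plan, which keeps all base objects as meanders with a homological decoration, would assign the wrong minima: for instance in $\hyp(2g-2)$ the base object is a pair of arcs needing only $2g-2$ crossings (Proposition~\ref{p:daom}), roughly a quarter of the $8g-8$ that a closed meander in $\calQ(2g-3,-1^{2g+1})$ requires (Proposition~\ref{p:closedmeander1}).

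Two further points. The doubling rule "squares $=$ twice the crossings" fails when a crossing sits at a branch point: in $\Qhyp(2,-1,-1)$ the minimal configuration reuses an anchor for both arcs, that crossing is not doubled under the cover, and the answer is $3$ rather than $4$ (Proposition~\ref{p:lift6}); so the Weierstrass locus must be tracked through the lift. And for the lower bounds, the Euler-characteristic and multiplicity counts you propose are not sharp on their own: the paper's lower-bound arguments (e.g.\ the claims inside Propositions~\ref{p:closedmeander1} and~\ref{p:closedmeander2}) hinge on \emph{connectivity} --- a rainbow configuration realising the right complementary polygons with the fewest crossings is disconnected, and one must count the minimum number of extra arcs needed to join its components without changing the stratum. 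That connectivity analysis, together with the correct arc/closed-curve dichotomy, is the substance missing from the proposal.
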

\begin{table}[H]
\centering
\addtolength{\leftskip} {-4cm}
\addtolength{\rightskip}{-4cm}
\begin{tabular*}{1.4\textwidth}{@{\extracolsep{\fill}}|c|c|c|c|}
\hline
 & \multicolumn{3}{c|}{Minimum number of squares required to produce a} \\
 Connected component & \multicolumn{3}{c|}{$[1,1]$-pillowcase cover whose cylinders are} \\
 & both non-sep. & one sep. one non-sep. & both sep. \\
\hline
 $\hyp(2g-2), g\geq 2$ & $4g-4$ & n/a & n/a \\
 $\hyp(g-1,g-1), g\geq 2$ & $4g-2$ & n/a & n/a \\
 $\Qhyp(4j+2,4k+2),k\geq j\geq 0$ & $4j+4k+4$ & $\max\{8j+6,8k+4\}$ & $16k-8j+8$ \\
 $\Qhyp(4j+2,2k-1,2k-1),j\geq 1,k\geq 0,j\geq k$ & $4j+4k+2$ & $8j+4$ & $16j-8k+12$ \\
 $\Qhyp(4j+2,2k-1,2k-1),k>j\geq 0$ & $4j+4k+2$ & $8k$ & $16k-8j$ \\
 $\Qhyp(2j-1,2j-1,2k-1,2k-1),k\geq 1, j\geq 0, k\geq j$ & $4j+4k$ & $\max\{8j+2,8k\}$ & $16k-8j+4$ \\
 $\Qhyp(2,-1,-1)$ & $3$ & $4$ & $12$ \\
\hline
\end{tabular*}
\vspace*{10pt}
\caption{Minimum number of squares required for hyperelliptic $[1,1]$-pillowcase covers depending on the separating properties of their cylinders}\label{tab:theorem}
\end{table}

We find that, as you increase the number of separating core curves from zero to two, the number of squares required increasingly depends on how the zeros are related to one another (from no relation, to maximum, to difference).

\subsection*{Meanders in the proof of Theorem~\ref{th:main}}

As alluded to above, the core curves of the cylinders of a $[1,1]$-pillowcase cover form a filling pair on the surface. For a hyperelliptic $[1,1]$-pillowcase cover, these curves descend under the double cover associated to a hyperelliptic involution to a curve and arc system on a puncture sphere. In fact, they descend to an arc and curve system that corresponds to a (potentially open or semi-) meander with very specific combinatorics. We consider minimal constructions of meanders with the necessary combinatorics. By taking double covers of these meanders we obtain $[1,1]$-pillowcase covers in the hyperelliptic components of the moduli space of quadratic differentials. Moreover, these constructions are minimal since they arose from minimal constructions of the associated meanders.

\subsection*{Ratio-optimising pseudo-Anosovs in the Johnson filtration}

Ratio-optimising \linebreak pseudo-Anosovs are pseudo-Anosov homeomorphisms of a surface that optimise a geometrically significant ratio related to the action of the homeomorphism on Teichm{\"u}ller space and the curve graph. We direct the reader to Section~\ref{s:ROs} for the explicit details.

Such pseudo-Anosovs were utilised by Gadre-Hironaka-Kent-Leininger~\cite{GHKL} in their determination of the optimum Lipschitz constant for the systole map from Teichm{\"u}ller space to the curve graph. Following this work, Aougab-Taylor~\cite{AT} gave a construction of ratio-optimising pseudo-Anosovs using a Thurston construction on suitable filling pairs on a surface. Moreover, they showed that when both of the curves in the filling pair are separating curves then ratio-optimising pseudo-Anosovs can be produced from this filling pair all of which will stabilise the Teichm{\"u}ller disk of the pillowcase cover associated to the filling pair and can be taken to lie arbitrarily deep in the Johnson filtration of the mapping class group (see Subsection~\ref{ss:Jfilt} for definitions). A natural question to ask is the following:

\begin{question}[{\cite[Question 7.4]{J0}}]
Which connected components of the moduli space of quadratic differentials contain quadratics differentials whose \teichmuller disks are stabilised by ratio-optimising pseudo-Anosovs lying arbitrarily deep in the Johnson filtration?
\end{question}

Since $[1,1]$-pillowcase covers correspond to filling pairs on surfaces, we can use the filling pairs corresponding to $[1,1]$-pillowcase covers where both core curves are separating in this construction of Aougab-Taylor in order to prove the following result.

\begin{theorem}\label{th:RO}
Let $\mathscr{C}$ be any hyperelliptic connected component of the moduli space of quadratic differentials $\calQ_{g}$ with no poles. There exist infinitely many ratio-optimising pseudo-Anosov homeomorphisms lying arbitrarily deep in the Johnson filtration of the mapping class group and stabilising the \teichmuller disk of a quadratic differential from $\mathscr{C}$.
\end{theorem}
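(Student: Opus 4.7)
The strategy is to combine Theorem~\ref{th:main} with the Aougab--Taylor pseudo-Anosov construction~\cite{AT}. First, I would identify the hyperelliptic connected components of $\calQ_{g}$ with no poles: these are $\Qhyp(4j+2,4k+2)$, $\Qhyp(4j+2,2k-1,2k-1)$ with $k\geq 1$, and $\Qhyp(2j-1,2j-1,2k-1,2k-1)$ with $j\geq 1$, and all of them live on surfaces of genus $g\geq 2$ since $\sum k_i>0$ rules out the torus. For each such component $\mathscr{C}$, the third column of Table~\ref{tab:theorem} returns a finite positive integer, so Theorem~\ref{th:main} produces a $[1,1]$-pillowcase cover $(X,q)\in\mathscr{C}$ whose horizontal and vertical core curves $\alpha$ and $\beta$ are both separating simple closed curves on $X$. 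Being a $[1,1]$-pillowcase cover, $(\alpha,\beta)$ is a filling pair on $X$.

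Next, I would feed the separating filling pair $(\alpha,\beta)$ into the Aougab--Taylor construction. Their construction is a Thurston-style construction on a filling pair; the flat structure produced from the dual square-tiling of $(\alpha,\beta)$ is precisely $(X,q)$, so the affine group of $(X,q)$ contains the group generated by the Dehn twists $T_\alpha$ and $T_\beta$. The key result from~\cite{AT} that we invoke is that when both curves of the filling pair are separating, one can exhibit explicit words in $T_\alpha, T_\beta$ and their powers that are simultaneously pseudo-Anosov, ratio-optimising in the sense of~\cite{GHKL}, and lie arbitrarily deep in the Johnson filtration of $\Mod(X)$. Since any element of $\langle T_\alpha, T_\beta\rangle$ stabilises the \teichmuller disk of $(X,q)$, the pseudo-Anosovs they produce do exactly what the theorem requires.

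To upgrade ``arbitrarily deep'' to ``infinitely many'', I would observe that a sequence $\{\phi_n\}$ with $\phi_n$ lying in a strictly deeper term of the Johnson filtration than $\phi_{n-1}$ must consist of infinitely many distinct mapping classes, because the Johnson filtration terms are strictly decreasing and any fixed non-trivial mapping class has a finite maximum depth at which it survives. The only real obstacle in carrying out this plan is a sanity check that the hypotheses of~\cite{AT} are genuinely met by the filling pair produced via Theorem~\ref{th:main} -- namely, that it consists of two separating simple closed curves on a surface of genus at least two that jointly fill -- and each of these properties is built into the output of Theorem~\ref{th:main}.
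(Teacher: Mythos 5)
Your overall route is the same as the paper's: take the both-separating $[1,1]$-pillowcase covers from Theorem~\ref{th:main} and feed the corresponding separating filling pairs into the Aougab--Taylor construction. But your ``sanity check'' list of hypotheses is missing the one hypothesis that actually carries the content of the proof. Theorem 5.1 of~\cite{AT} does not apply to an arbitrary separating filling pair on a genus~$\geq 2$ surface: it requires the \emph{geometric intersection number of the filling pair to be bounded polynomially in the genus}. This is not a formality. The ratio-optimising property means $\ell_{\T}(f)/\ell_{\C}(f)\lesssim\log(g)$, and the Aougab--Taylor estimate for this ratio degrades with the intersection number of the filling pair used in the Thurston construction; if the intersection number grew superpolynomially in $g$, the resulting pseudo-Anosovs would not be ratio-optimising. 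Your proposal asserts ``each of these properties is built into the output of Theorem~\ref{th:main}'' while listing only separating, filling, and genus~$\geq 2$ -- so as written the ratio-optimising conclusion is not justified.

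The gap is closable with material you already have, and closing it is essentially the entirety of the paper's proof: the number of squares of a $[1,1]$-pillowcase cover equals the intersection number of its core curves, and the third column of Table~\ref{tab:theorem} shows this is linear in the genus for every hyperelliptic component (the worst case in a fixed genus $g$ being the $16g+4$ squares for $\Qhyp(2g-1,2g-1,-1,-1)$), hence certainly polynomial. You should state this verification explicitly rather than fold it into an unexamined checklist. Your remaining points are fine: the identification of the pole-free hyperelliptic components is correct, and your argument upgrading ``arbitrarily deep'' to ``infinitely many'' works (though~\cite{AT} already supplies infinitely many conjugacy classes directly, so the paper does not need it).
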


That is, we answer the above question for the hyperelliptic connected components. The case of non-hyperelliptic connected components remains open.

\subsection*{Outline of the paper}

In Section~\ref{s:QDs}, we give the necessary background on quadratic differentials and pillowcase covers. In Section~\ref{s:meanders}, we give the definitions of the meanders and required variations that are of interest to this work. In Section~\ref{s:constructions}, we will give the minimal constructions of the relevant meanders and describe how to lift these to the $[1,1]$-pillowcase covers in the hyperelliptic components - that is, we prove Theorem~\ref{th:main}. In Section~\ref{s:ROs}, we describe the applications of these constructions to the study of ratio-optimising pseudo-Anosovs in the Johnson filtration and prove Theorem~\ref{th:RO}. Finally, in Section~\ref{s:othermeanders} we consider minimal constructions of meanders in more general strata than those listed in Table~\ref{tab:theorem}.

\subsection*{Acknowledgements}

We thank Tara Brendle for useful discussions related to \linebreak Lemma~\ref{l:lift}. We also thank the referee for their careful reading and their suggestions that have helped to improve the clarity of the proofs of Propositions~\ref{p:closedmeander1} and~\ref{p:closedmeander2}.

\section{Quadratic differentials}\label{s:QDs}

Here we give the necessary background on quadratic differentials and pillowcase covers. We direct the reader to the works of Forni-Matheus~\cite{FoM}, Zorich~\cite{Z2}, and Strebel~\cite{S} for more details.

A {\em quadratic differential} $q$ on a compact Riemann surface $X$ of genus $g\geq 0$ is a global section of the symmetric square of the canonical line bundle $\Omega(X)$. That is, in local coordinates $q$ is given by $f(z)dz^{2}$. An {\em Abelian differential} on $X$ is a non-zero holomorphic one-form $\omega$ locally given by $\omega=f(z)dz$ for a holomorphic function $f$. The global square of an Abelian differential gives rise to a quadratic differential on a Riemann surface. We define the moduli space of integrable meromorphic quadratic differentials on a Riemann surface $X$ to be the quotient by the action of the mapping class group of the set of pairs $(X,q)$, where $X$ is a closed connected Riemann surface of genus $g$ and $q$ is a non-zero meromorphic quadratic differential on $X$ having at most simple poles. As discussed in the introduction, this moduli space is then the disjoint union
\[\calH_{g}\sqcup\calQ_{g},\]
where $\calH_{g}$ is the subset of those quadratic differentials that are global squares of Abelian differentials.

\begin{figure}[b]
\begin{center}
\begin{tikzpicture}[scale = 0.6]
\draw [line width = 0.3mm, line cap = round] (0,0)--(1,3);
\draw (0.5-0.3,1.5+0.1) node{0};
\draw (1,3) node{$\bullet$};
\draw [line width = 0.3mm, line cap = round] (1,3)--(2,3.2);
\draw (1.5,3.5) node{1};
\draw (2,3.2) node{$\bullet$};
\draw [line width = 0.3mm, line cap = round] (2,3.2)--(3.4,2);
\draw (2.9,2.9) node{2};
\draw (3.4,2) node{$\bullet$};
\draw [line width = 0.3mm, line cap = round] (3.4,2)--(4.5,3);
\draw (3.95-0.25,2.5+0.275) node{3};
\draw (4.5,3) node{$\bullet$};
\draw [line width = 0.3mm, line cap = round] (4.5,3)--(6,1);
\draw (0.75+0.3+4.5,-1+0.225+3) node{4};
\draw (6,1) node{$\bullet$};
\draw [line width = 0.3mm, line cap = round] (6,1)--(5,-2);
\draw (0.5+0.3+5,1.5-0.1-2) node{0};
\draw (5,-2) node{$\bullet$};
\draw [line width = 0.3mm, line cap = round] (2.6,-1)--(3.6,-0.8);
\draw (3.1,-1.3) node{1};
\draw (2.6,-1) node{$\bullet$};
\draw [line width = 0.3mm, line cap = round] (5,-2)--(3.6,-0.8);
\draw (4.1,-1.7) node{2};
\draw (3.6,-0.8) node{$\bullet$};
\draw [line width = 0.3mm, line cap = round] (2.6,-1)--(1.5,-2);
\draw (3.95+0.25-1.9,2.5-0.275-4) node{3};
\draw (1.5,-2) node{$\bullet$};
\draw [line width = 0.3mm, line cap = round] (1.5,-2)--(0,0);
\draw (0.75-0.3,-1-0.225) node{4};
\draw (0,0) node{$\bullet$};
\draw [line width = 0.3mm, line cap = round] (7+1,0)--(8+1,3);
\draw (7.5-0.3+1,1.5+0.1) node{0};
\draw (9,3) node{$\bullet$};
\draw [line width = 0.3mm, line cap = round, ->] (8+1,3)--(8.6+1,2.75);
\draw [line width = 0.3mm, line cap = round] (8.6+1,2.75)--(9.2+1,2.5);
\draw (8.6+0.15+1,2.75+0.36) node{1};
\draw (10.2,2.5) node{$\bullet$};
\draw [line width = 0.3mm, line cap = round] (9.2+1,2.5)--(9.8+1,2.25);
\draw [line width = 0.3mm, line cap = round, <-] (9.8+1,2.25)--(10.4+1,2);
\draw (9.8+0.15+1,2.25+0.36) node{1};
\draw (11.4,2) node{$\bullet$};
\draw [line width = 0.3mm, line cap = round] (10.4+1,2)--(11.5+1,3);
\draw (10.95-0.25+1,2.5+0.275) node{2};
\draw (12.5,3) node{$\bullet$};
\draw [line width = 0.3mm, line cap = round] (11.5+1,3)--(13+1,1);
\draw (7.75+0.3+4.5+1,-1+0.225+3) node{3};
\draw (14,1) node{$\bullet$};
\draw [line width = 0.3mm, line cap = round] (13+1,1)--(12+1,-2);
\draw (7.5+0.3+5+1,1.5-0.1-2) node{0};
\draw (13,-2) node{$\bullet$};
\draw [line width = 0.3mm, line cap = round] (10.8+1,-1.5)--(8.6+1.6+1,2.75-4);
\draw [line width = 0.3mm, line cap = round, ->] (9.6+1,-1)--(8.6+1.6+1,2.75-4);
\draw (8.6-0.15+1.6+1,2.75-0.36-4) node{4};
\draw (10.6,-1) node{$\bullet$};
\draw [line width = 0.3mm, line cap = round, ->] (12+1,-2)--(9.8+1.6+1,2.25-4);
\draw [line width = 0.3mm, line cap = round] (9.8+1.6+1,2.25-4)--(10.8+1,-1.5);
\draw (9.8-0.15+1.6+1,2.25-0.36-4) node{4};
\draw (11.8,-1.5) node{$\bullet$};
\draw [line width = 0.3mm, line cap = round] (9.6+1,-1)--(8.5+1,-2);
\draw (10.95+0.25-1.9+1,2.5-0.275-4) node{2};
\draw (9.5,-2) node{$\bullet$};
\draw [line width = 0.3mm, line cap = round] (8.5+1,-2)--(7+1,0);
\draw (7.75-0.3+1,-1-0.225) node{3};
\draw (8,0) node{$\bullet$};
\end{tikzpicture}
\end{center}
\caption{Two half-translation surfaces. Sides with the same label are identified by translation, or by half-translation when arrows are indicated. The surface on the left is also a translation surface.}
\label{f:half-translationsurfaces}
\end{figure}

\begin{figure}[t]
\begin{center}
\begin{tikzpicture}[scale = 1.25]
\draw [line width = 0.3mm, line cap = round] (0,0)--node[left]{0}(0,1);
\draw [line width = 0.3mm, line cap = round] (0,1)--node[above]{1}(1,1);
\draw [line width = 0.3mm, line cap = round] (1,1)--node[above]{2}(2,1);
\draw [line width = 0.3mm, line cap = round] (2,1)--node[above]{1}(3,1);
\draw [line width = 0.3mm, line cap = round] (3,1)--node[above]{2}(4,1);
\draw [line width = 0.3mm, line cap = round] (4,1)--node[above]{3}(5,1);
\draw [line width = 0.3mm, line cap = round] (5,1)--node[right]{0}(5,0);
\draw [line width = 0.3mm, line cap = round] (5,0)--node[below]{3}(4,0);
\draw [line width = 0.3mm, line cap = round] (4,0)--node[below]{5}(3,0);
\draw [line width = 0.3mm, line cap = round] (3,0)--node[below]{4}(2,0);
\draw [line width = 0.3mm, line cap = round] (2,0)--node[below]{5}(1,0);
\draw [line width = 0.3mm, line cap = round] (1,0)--node[below]{4}(0,0);
\draw [densely dashed](1,0)--(1,1);
\draw [densely dashed](2,0)--(2,1);
\draw [densely dashed](3,0)--(3,1);
\draw [densely dashed](4,0)--(4,1);
\draw [line width = 0.3mm, color = red] (0,0.5)--(5,0.5);
\draw [line width = 0.3mm, color = blue] (0.5,0)--(0.5,1);
\draw [line width = 0.3mm, color = green] (1.5,0)--(1.5,1);
\draw [line width = 0.3mm, color = blue] (2.5,0)--(2.5,1);
\draw [line width = 0.3mm, color = green] (3.5,0)--(3.5,1);
\draw [line width = 0.3mm, color = flame] (4.5,0)--(4.5,1);
\foreach \x in {0,1,2,3,4,5}
	\draw (\x,0) node{$\bullet$};
\foreach \x in {0,1,2,3,4,5}
	\draw (\x,1) node{$\bullet$};
\end{tikzpicture}
\begin{tikzpicture}[scale = 1.25]
\draw [line width = 0.3mm, line cap = round] (0,0)--node[left]{0}(0,1);
\draw [line width = 0.3mm, line cap = round] (0,1)--node[above]{1}(1,1);
\draw [line width = 0.3mm, line cap = round] (1,1)--node[above]{2}(2,1);
\draw [line width = 0.3mm, line cap = round] (2,1)--node[above]{3}(3,1);
\draw [line width = 0.3mm, line cap = round] (3,1)--node[above]{4}(4,1);
\draw [line width = 0.3mm, line cap = round] (4,1)--node[above]{3}(5,1);
\draw [line width = 0.3mm, line cap = round] (5,1)--node[right]{0}(5,0);
\draw [line width = 0.3mm, line cap = round] (5,0)--node[below]{5}(4,0);
\draw [line width = 0.3mm, line cap = round] (4,0)--node[below]{1}(3,0);
\draw [line width = 0.3mm, line cap = round] (3,0)--node[below]{4}(2,0);
\draw [line width = 0.3mm, line cap = round] (2,0)--node[below]{5}(1,0);
\draw [line width = 0.3mm, line cap = round] (1,0)--node[below]{2}(0,0);
\draw [densely dashed](1,0)--(1,1);
\draw [densely dashed](2,0)--(2,1);
\draw [densely dashed](3,0)--(3,1);
\draw [densely dashed](4,0)--(4,1);
\draw [line width = 0.3mm, color = red] (0,0.5)--(5,0.5);
\draw [line width = 0.3mm, color = green] (0.5,0)--(0.5,1);
\draw [line width = 0.3mm, color = green] (1.5,0)--(1.5,1);
\draw [line width = 0.3mm, color = green] (2.5,0)--(2.5,1);
\draw [line width = 0.3mm, color = green] (3.5,0)--(3.5,1);
\draw [line width = 0.3mm, color = green] (4.5,0)--(4.5,1);
\foreach \x in {0,1,2,3,4,5}
	\draw (\x,0) node{$\bullet$};
\foreach \x in {0,1,2,3,4,5}
	\draw (\x,1) node{$\bullet$};
\end{tikzpicture}
\end{center}
\caption{Two pillowcase covers. The surface at the top has a single-horizontal cylinder but two vertical cylinders. The surface at the bottom is a $[1,1]$-pillowcase cover.}
\label{f:pillowcasecovers}
\end{figure}

A Riemann surface equipped with a quadratic differential can be realised as the quotient of a collection of Euclidean polygons in $\Cbb$ with pairs of parallel sides of equal length identified by half-translations ($z\mapsto \pm z + c$) such that the quotient is a closed connected oriented surface. For this reason, such a Riemann surface is also called {\em half-translation surfaces}. See Figure~\ref{f:half-translationsurfaces} for some examples.

We remark that if all of the side identifications are simply translations ($z\mapsto z+c$), then the obtained quadratic differential is the global square of an Abelian differential and therefore lies in $\calH_{g}$. In this case, the surface is called a {\em translation surface}. See the surface in the left of Figure~\ref{f:half-translationsurfaces}.

\subsection{Pillowcase covers}

Recall from the introduction that a {\em pillowcase cover} is a half-translation surface realised as the quotient of a collection of unit squares. See Figure~\ref{f:pillowcasecovers} for some examples. When drawing pillowcase covers, we do not specify arrows on sides identified by half-translation if it is clear from the context. In the case of translation surfaces, pillowcase covers are more commonly called {\em square-tiled surfaces}.

A cylinder in a half-translation surface is a maximally embedded flat annulus. If a pillowcase cover has a single vertical cylinder and a single horizontal cylinder then we shall call it a {\em $[1,1]$-pillowcase cover}. See the surface at the bottom of Figure~\ref{f:pillowcasecovers}. The core curves of the cylinders of a $[1,1]$-pillowcase cover form a filling pair on the underlying surface. The core curves of $[1,1]$-square-tiled surfaces are forced to be non-separating. This is because the sides of the squares can only be identified by translation and so all sides on one side of a core curve must also occur on the other side. Equivalently, all of the intersections of the associated filling pair occur with the same orientation. In the case of $[1,1]$-pillowcase covers that are not square-tiled surfaces, the core curves need not be separating. Indeed, one, both, or neither of the curves may be separating.

The connection between the stratum of a $[1,1]$-pillowcase cover and the combinatorics of the core curves is very explicit. Indeed, each complementary region of the filling pair obtained from the core curves contains a single vertex of the polygon (up to identification). Each vertex corresponds to a zero of order $k$ for the quadratic differential if and only if the boundary of its complementary region is a $(2k+4)$-gon. So, poles correspond to bigons, regular points correspond to squares, zeros of order 1 correspond to hexagons, and so on.

\subsection{Hyperelliptic connected components}\label{s:hypcomp}

By the Riemann-Roch theorem, we have that the sum of the orders of the zeros of a quadratic differential on a Riemann surface of genus $g$ is equal to $4g-4$. We define the stratum $\calQ(k_{1},\ldots,k_{n})\subset\calQ_{g}$, with $k_{i}\geq 1$ or $k_{i}=-1$ and $\sum_{i=1}^{n}k_{i} = 4g-4$, to be the subset of $\calQ_{g}$ consisting of quadratic differentials with $n$ distinct zeros of orders $k_{1},\ldots,k_{n}$. For $\calH_{g}$, since these quadratic differentials are squares of Abelian differentials and the sum of the orders of the zeros of an Abelian differential is $2g-2$, we define the stratum $\calH(k_{1},\ldots,k_{n})\subset\calH_{g}$, with $k_{i}\geq 1$ and $\sum_{i=1}^{n}k_{i} = 2g-2$, to be the subset of $\calH_{g}$ consisting of quadratic differentials that are squares of Abelian differentials with $n$ distinct zeros of orders $k_{1},\ldots,k_{n}$. If an order is repeated a number of times we may use exponentiation notation; for example, we might write $\calQ(1,-1,-1,-1,-1,-1)$ as $\calQ(1,-1^{5})$.

The classification of the connected components of strata of the moduli space of quadratic differentials was completed by Kontsevich-Zorich~\cite{KZ} in the case of $\calH_{g}$ and by Lanneau~\cite{Lan1,Lan2}, with a correction by Chen-M{\"o}ller~\cite{CM}, in the case of $\calQ_{g}$. In the case of $\calH_{g}$, connected components were determined by using the notions of spin parity and hyperellipticity. In $\calQ_{g}$, outside of a small number of exceptional strata in low genus, hyperellipticity is sufficient to determine the connected components.

We say that a half-translation surface $(X,q)$ is {\em hyperelliptic} if there exists an isometric involution $\tau:X\to X$, known as a hyperelliptic involution, that induces a ramified double cover $\pi:X\to S_{0,2g+2}$ from $X$ to the $(2g+2)$-times punctured sphere. There exists a double covering construction that takes a quadratic differential $(X_{0},q_{0})$ on the sphere and gives a quadratic differential $(X,q)$ on a higher genus Riemann surface. In the cases of
\[\calQ(2g-3,-1^{2g+1})\to \calH(2g-2)\]
and
\[\calQ(2g-2,-1^{2g+2})\to\calH(g-1,g-1)\]
for $g\geq 2$,
\[\calQ(2(g-k)-4,2k,-1^{2g})\to\calQ(4(g-k)-6,4k+2)\]
for $g\geq 2$ and $0\leq k\leq g-2$,
\[\calQ(2(g-k)-3,2k,-1^{2g+1})\to\calQ(2(g-k)-3,2(g-k)-3,4k+2)\]
 for $g\geq 1$ and $0\leq k\leq g-1$, and
\[\calQ(2(g-k)-3,2k+1,-1^{2g+2})\to\calQ(2(g-k)-3,2(g-k)-3,2k+1,2k+1)\] for $g\geq 1$ and $-1\leq k\leq g-2$, the maps given by this construction are immersions, and the connectedness of genus zero strata, the equality of dimension, and the ergodicity of the geodesic flow give that the images are connected components. We call these connected components the {\em hyperelliptic components} of such strata and denote them by $\hyp(k_{1},\ldots,k_{n})$ and $\Qhyp(k_{1},\ldots,k_{n})$, respectively. The hyperelliptic components contain those hyperelliptic half-translations surfaces $(X,q)$ for which there exists a quadratic differential $q_{0}$ on the sphere such that $\pi^{*}q_{0} = q$. 

\section{Meanders}\label{s:meanders}

An {\em open meander} is a planar arc system consisting of one bi-infinite horizontal line with a second simple arc that intersects the horizontal line transversally. See the left of Figure~\ref{f:openmeanders}.

An {\em open semi-meander} is a planar arc system consisting of one infinite horizontal ray with a second simple arc that intersects the horizontal ray transversally. The arc is allowed to pass around the end-point of the ray. See the right of Figure~\ref{f:openmeanders}.

A {\em closed meander} is a planar curve and arc system consisting of one bi-infinite horizontal line with a second simple closed curve that intersects the horizontal line transversally. See the left of Figure~\ref{f:closedmeanders}.

Finally, a {\em closed semi-meander} is a planar curve and arc system consisting of one infinite horizontal ray and a second simple closed curve that intersects the horizontal ray transversally and is allowed to pass around the end-point of the ray. See the right of Figure~\ref{f:closedmeanders}.

\subsection*{Some terminology} We will call an arc like the upper most arc in the meander on the left of Figure~\ref{f:closedmeanders} a \emph{maximal arc} since the remainder of the meander is contained between its endpoints. We will call a sequence of nested arcs a \emph{rainbow} -- see the upper half of the meander on the left of Figure~\ref{f:closedmeanders} for an example.

\begin{figure}[t]
\begin{center}
\includegraphics[scale=1]{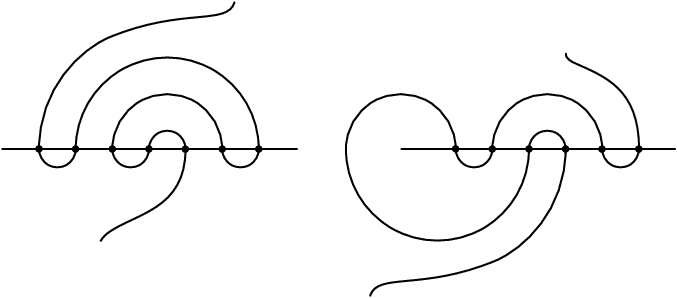}
\end{center}
\caption{An open meander on the left and an open semi-meander on the right.}
\label{f:openmeanders}
\end{figure}

\begin{figure}[t]
\begin{center}
\includegraphics[scale=1]{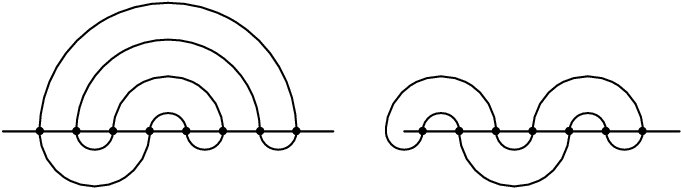}
\end{center}
\caption{A closed meander on the left and a closed semi-meander on the right.}
\label{f:closedmeanders}
\end{figure}

\begin{figure}[t]
\begin{center}
\includegraphics[scale=1]{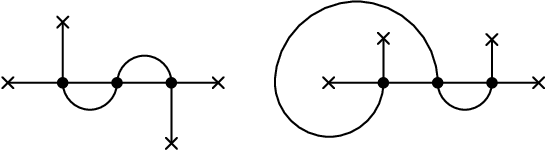}
\end{center}
\caption{A doubly-anchored open meander on the left and a doubly-anchored open semi-meander on the right.}
\label{f:doubly-anchored}
\end{figure}

\begin{figure}[t]
\begin{center}
\includegraphics[scale=0.9]{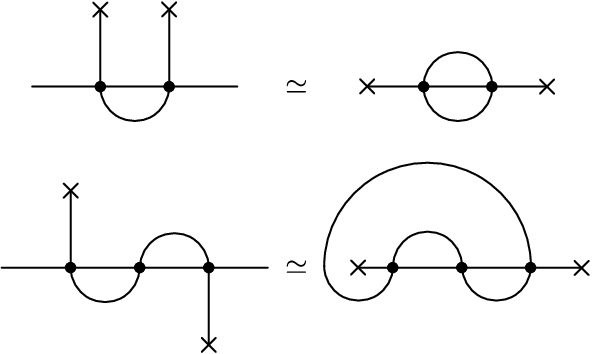}
\end{center}
\caption{Singly-anchored open meanders on the left and the equivalent singly-anchored closed meander and singly-anchored closed semi-meander on the right.}
\label{f:singly-anchored}
\end{figure}

\subsection{Anchored meanders}

Since we will be thinking of meanders as very specific curve and arcs systems on punctured spheres, there will be situations where we want to consider the horizontal lines/rays and infinite transversal arcs as specific arcs between punctures on the sphere and not as infinite lines. In such a case, we will say that the arc, and the associated meander, are {\em anchored}. Conversely, non-anchored infinite lines/arcs will correspond to closed curves on the sphere.

More specifically, we will make the following definitions.

Firstly, suppose that we have an open meander. If we anchor both the horizontal line and the intersecting arc then we will call such a configuration a {\em doubly-anchored open meander}. See Figure~\ref{f:doubly-anchored}. In this situation, both of the lines in the meander are now arcs on the punctured sphere and we observe that the regions that were originally above and below the horizontal line are now connected.

We can similarly define a doubly-anchored open semi-meander.

Now, suppose that we again have an open meander. We will now only anchor the intersecting arc. We will call such a configuration a {\em singly-anchored open meander}. See Figure~\ref{f:singly-anchored}. In this case, the horizontal line remains a closed curve on the punctured sphere while the intersecting arc is genuinely an arc on the punctured sphere. Up to homotopy on the sphere, we can also think of such a configuration as a singly-anchored closed meander or semi-meander where we have anchored the horizontal line/ray.

\subsection{The stratum of a meander}

When we have a closed meander, the curve system is exactly a filling pair on the punctured sphere and we can build the associated $[1,1]$-pillowcase cover. This pillowcase cover will lie in some genus zero quadratic stratum and we will abuse language and notation by calling this the {\em stratum of the meander
}.

Recall that the orders of the zeros of the pillowcase cover associated to the meander are related exactly to the numbers of sides of the complementary polygons. We will carry this idea forward and further abuse our language and notation in order to define the `stratum' of an anchored meander. Though this has no meaning in the sense of stratum that we have used for the moduli spaces considered above, it will be useful language and notation for the following section.

If we think of the anchored meander as a graph in the plane, then we can consider its complementary regions as we did for the closed meanders above. If a complementary region is bounded by edges that are not incident to valence one vertices then we simply treat this complementary region as a polygon, as we did in the closed meander case. If the complementary region is bounded by edges some of which are incident to valence one vertices then we treat this as a polygon where the edges incident to valence one vertices are counted as a single side of the polygon. Once we have constructed these polygons, we associate each polygon to a zero of a specific order that is determined exactly as in the meander case. We also add a zero of order $-1$ for each valence one vertex. This collection of orders of zeros will label a genus zero quadratic stratum and we will call this the {\em stratum of the anchored meander}. See Figure~\ref{f:meanderstrata} for some examples.

\begin{figure}[t]
\begin{center}
\includegraphics[scale=1]{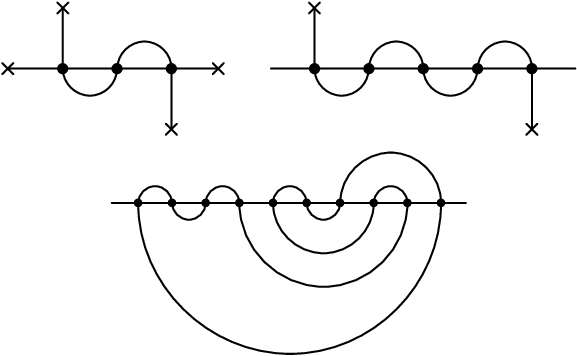}
\end{center}
\caption{A doubly-anchored open meander in $\calQ(2,-1^{6})$, a singly-anchored meander in $\calQ(1,1,-1^{6})$ and a closed meander in $\calQ(2,1,-1^{7})$.}
\label{f:meanderstrata}
\end{figure}

\subsection{Meanders from hyperelliptic {\boldmath $[1,1]$}-pillowcase covers}

The core curves of the cylinders of a hyperelliptic $[1,1]$-pillowcase cover are mapped to themselves by the hyperelliptic involution. Such curves are said to be \emph{symmetric}. A non-separating symmetric curve descends to an arc on the punctured sphere while a symmetric separating curve will descend to a closed curve. Therefore, the core curves of a hyperelliptic $[1,1]$-pillowcase cover will descend to a doubly-anchored open meander, a singly-anchored open meander, or a closed meander, if the number of separating core curves is 0, 1, or 2, respectively. The proofs for hyperelliptic components and non-separating core curves in the previous work of the author~\cite{J0,J1} were essentially using a less formal realisation of this idea.

\section{Minimal constructions}\label{s:constructions}

In this section, we will prove Theorem~\ref{th:main}. We will begin by giving minimal constructions of the meanders and their variations in the genus zero quadratic strata relevant to the constructions of hyperelliptic pillowcase covers. We will then describe how these constructions can be lifted to give minimal constructions of hyperelliptic $[1,1]$-pillowcase covers in the hyperelliptic components of the moduli space of quadratic differentials.

\subsection{Minimal closed meanders}

We begin by constructing closed meanders in the strata relevant to the constructions of hyperelliptic pillowcase covers.

\begin{figure}[t]
\begin{center}
\includegraphics[scale=1]{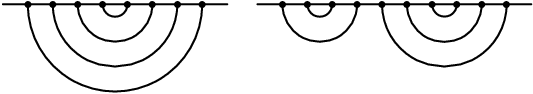}
\end{center}
\caption{Possible configurations for the bottom of the meander.}
\label{f:closedmeanderproof1}
\end{figure}

\begin{figure}[t]
\begin{center}
\includegraphics[scale=0.75]{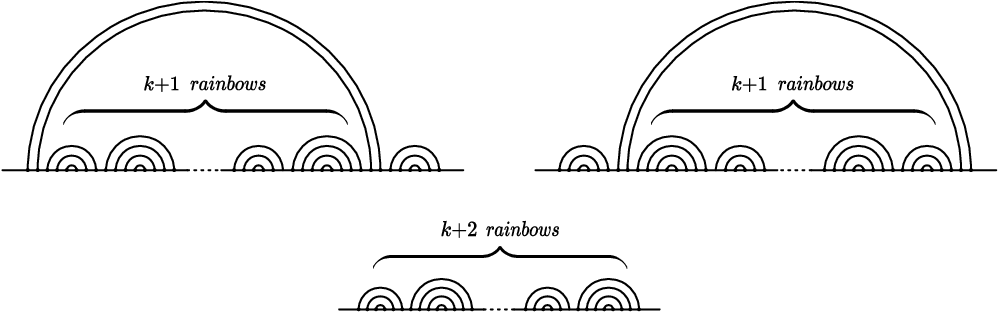}
\end{center}
\caption{Forced configurations for the top of the meander.}
\label{f:closedmeanderproof2}
\end{figure}

\begin{proposition}\label{p:closedmeander1}
The minimum number of crossings required to construct a closed meander in $Q(k,-1^{k+4})$, $k\geq 0$, is $4k+4$.
\end{proposition}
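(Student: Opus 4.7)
The plan is to prove Proposition~\ref{p:closedmeander1} in two parts: an explicit construction giving the upper bound $N \leq 4k+4$, and a combinatorial case-analysis giving the matching lower bound $N \geq 4k+4$.

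For the upper bound, I would construct, for each $k \geq 0$, an explicit closed meander with $4k+4$ crossings whose complementary regions consist of one $(2k+4)$-gon (realising the zero of order $k$), $k+4$ bigons (realising the simple poles), and the remaining $3k+1$ regions all being squares (regular marked points, which do not alter the stratum of the underlying quadratic differential). The construction places $4k+4$ intersection points in a row along the horizontal line and specifies non-crossing arch matchings above and below chosen so that (i) their union forms a single Hamiltonian cycle (so the meander is a single closed curve), and (ii) the prescribed face-degree count is achieved. The pattern generalises a basic template visible for small $k$: the distinguished $(2k+4)$-gon is bounded by an outer ``frame'' of arches on one side of the line, while the other side is completed by rainbow-style nested arches calibrated to deliver exactly $k+4$ bigons together with the extra squares.

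For the lower bound, I would show that any closed meander realising $Q(k,-1^{k+4})$ uses at least $4k+4$ crossings. Observe that Euler's formula on the sphere alone gives only $N \geq k+3$; the stronger bound must exploit three simultaneous constraints: the non-crossing structure of arches on each side of the line, the requirement of a single closed curve, and the precise face-degree count. Following the strategy signposted by Figures~\ref{f:closedmeanderproof1} and~\ref{f:closedmeanderproof2}, I would first, after reflecting in the horizontal line if needed, enumerate the possible configurations for the arches on the bottom of the meander consistent with the required face structure, obtaining a short list of ``possible configurations for the bottom''. For each such bottom configuration, the requirement of a single closed curve together with the non-crossing matching condition above forces the top of the meander into a particular (essentially rainbow) nested pattern. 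Counting arches in each resulting forced configuration yields at least $2k+2$ arches on each side, hence $N \geq 4k+4$.

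The hardest part of the argument is the lower bound case analysis, specifically the job of ruling out more exotic configurations. Heuristically, any attempt to realise $Q(k,-1^{k+4})$ with fewer than $4k+4$ crossings either introduces spurious non-square, non-bigon faces (placing the meander in a different stratum) or disconnects the curve into more than one component (spoiling the closed meander property). Carrying out this dichotomy exhaustively, case-by-case for each bottom configuration in Figure~\ref{f:closedmeanderproof1} and tracking how the forced top configuration of Figure~\ref{f:closedmeanderproof2} interacts with the closed-curve constraint, is the delicate combinatorial heart of the proof.
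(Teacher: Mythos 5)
Your overall strategy matches the paper's: establish an explicit $4k+4$ construction, and for the lower bound analyse the forced bottom configuration (single rainbow up to rotation, after placing the order-$k$ zero above the line) and the forced rainbow pattern on top. However, there is a genuine gap at the decisive step of the lower bound. You assert that ``counting arches in each resulting forced configuration yields at least $2k+2$ arches on each side,'' but the configuration forced by the face-degree constraints alone (bigons and squares below, bigons, squares and one $(2k+4)$-gon above) has only $2k+4$ crossings in total, i.e.\ $k+2$ arches per side --- and it is \emph{disconnected}, with $\lceil\frac{k+2}{2}\rceil$ components. The remaining $2k$ crossings are not forced by the stratum data at all; they are the price of connectivity, and extracting that price is the actual content of the proof. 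You correctly identify the dichotomy ``wrong stratum or disconnected'' as the heart of the matter, but you do not supply the counting argument that converts ``must be connected'' into ``must have at least $k$ extra arcs.''

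The paper closes this gap as follows: each additional arc placed inside a bigon contributes two crossings, so one must show that fewer than $k$ added arcs cannot connect the meander. If fewer than $k$ arcs are added to the top bigons, at least $3$ of those bigons remain single-arc rainbows; because the bottom of the meander is a single (nested) rainbow, each connected component of the curve can contain at most two such single-arc rainbows (they behave as the two ``ends'' of a component), so there are at least two components. This yields the lower bound $2k+4+2k=4k+4$, and a matching construction (adding arcs alternately to the rightmost, then next-leftmost, bigons, etc.) shows $k$ added arcs suffice. Without an argument of this kind --- some invariant bounding from below the number of arcs needed to merge $\lceil\frac{k+2}{2}\rceil$ components given the rigid nesting structure --- your lower bound does not close, since a priori a cleverer connected configuration might reuse stratum-forced arches to achieve connectivity more cheaply.
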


\begin{proof}
We suppose that the zero of order $k$ lies above the horizontal line. Then the bottom of the meander can only contain 4-gons and bigons and so is forced to look like one of the two configurations shown in Figure~\ref{f:closedmeanderproof1}. That is, it either consists of a single rainbow or exactly two rainbows.

Up to cyclic rotation along the horizontal line, we may assume that the configuration on the left holds. That is, it consists of a single rainbow.

The top half of the meander can only contain bigons, 4-gons and a single $(2k+4)$-gon. It must therefore have one of the configurations shown in Figure~\ref{f:closedmeanderproof2}.

We will argue that it must have the bottom configuration consisting of $k+2$ rainbows. Indeed, for minimality of the meander, we cannot have the rightmost (or leftmost) rainbow in the top left (top right) configuration. Indeed, we would be able to ``unwind" this rainbow without changing the stratum of the meander (since we are ignoring zeroes of order zero) yet reducing the number of crossings. See Figure~\ref{f:closedmeanderproof3}. We will no longer be able to unwind once the top of the meander has one of the configurations shown in Figure~\ref{f:closedmeanderproof4}.

\begin{figure}[t]
\begin{center}
\includegraphics[scale=0.8]{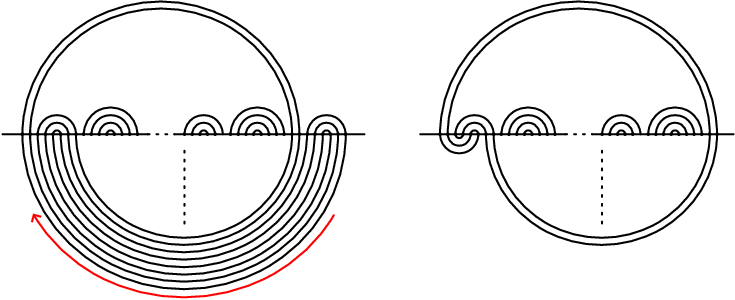}
\end{center}
\caption{Unwinding the rightmost rainbow.}
\label{f:closedmeanderproof3}
\end{figure}

\begin{figure}[t]
\begin{center}
\includegraphics[scale=1]{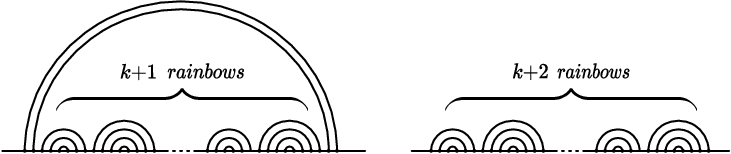}
\end{center}
\caption{Configurations for the top of the meander after unwinding is completed.}
\label{f:closedmeanderproof4}
\end{figure}

However, we cannot have the configuration on the left as this will give us maximal arcs on both the top and the bottom which would close to form a circle and so the meander would be disconnected. We are already assuming that we have a maximal arc on the bottom and so the top must look as in the right of Figure~\ref{f:closedmeanderproof4}.


\begin{claim}
The minimum number of crossings required to construct a closed meander in $Q(k,-1^{k+4})$, $k\geq 0$, is at least $4k+4$.
\end{claim}

\begin{proof}
A connected meander has an even number of arcs (each arc in the meander lies in the bottom, then the top, then the bottom, and so on). Suppose that we have $4k+2$ or fewer arcs in total, then we have at most $2k+1$ arcs in the top of the meander. We have $k+2$ rainbows in the top of the meander and so, $k+2$ of these arcs are used to form the upper arcs of the rainbows. This leaves us with at most $k-1$ additional arcs. Therefore, at least 3 of the $k+2$ rainbows are single-arc rainbows (i.e., just bigons). The configuration of the meander, the bottom being a single rainbow and the top being a sequence of rainbows, means that every connected component of the meander can contain at most two of the single-arc rainbows in the top of the meander. To see this, think of the single-arc rainbows as being `ends' of a connected component of the meander. We start in one end then are forced through the bottom rainbow, then a top rainbow, then the bottom rainbow, and so on, until we hit the bigon at the other end. We have at least 3 single-arc rainbows in the top, so we have at least two connected components, and so the meander is not connected if we have $4k+2$ or fewer arcs in the meander. Therefore, we require at least $4k+4$ arcs in total and so at least $4k+4$ crossings.
\end{proof}

The meander show in Figure~\ref{f:closedmeanderproof7} achieves this lower bound and therefore completes the proof of the proposition. It is realised by having $k+2$ rainbows in the top. The leftmost rainbow is a single-arc bigon, the rightmost rainbow is a 2-arc rainbow, the second to leftmost rainbow is a 2-arc rainbow, the second to rightmost rainbow is a 2-arc rainbow, and we continue inwards alternating from left to right with 2-arc rainbows until we reach the final innermost rainbow which is also a single-arc bigon. So we have $k$ 2-arc rainbows and 2 single-arc rainbows which gives us the $2k+2$ arcs on the top, and so the $4k+4$ crossings in the meander. This construction was known to the author in his thesis~\cite[Proposition 6.21]{J0}, but only conjectured to be minimal in that work.
\end{proof}

\begin{figure}[t]
\begin{center}
\includegraphics[scale=1]{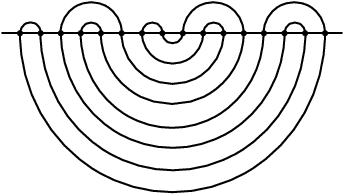}
\end{center}
\caption{A meander realising the $4k+4$ crossing lower bound.}
\label{f:closedmeanderproof7}
\end{figure}

\begin{figure}[b]
\begin{center}
\includegraphics[scale=1]{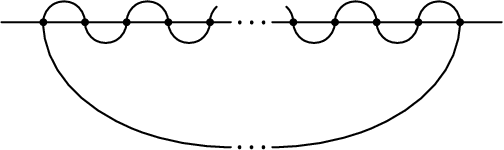}
\end{center}
\caption{The minimal number of crossings ($2k_{1}+4$) for a closed meander in $\calQ(k_{1}^{2},-1^{2k_{1}+4})$.}
\label{f:closedmeanderproof8}
\end{figure}

We now handle strata of the form $Q(k_{1},k_{2},-1^{k_{1}+k_{2}+4})$. The cases $k_{1} = k_{2}$ and $k_{1} = k_{2}+1$ were previously handled in the author's thesis~\cite[Proposition 6.23]{J0}.

\begin{proposition}\label{p:closedmeander2}
The minimum number of crossings required to construct a closed meander in $Q(k_{1},k_{2},-1^{k_{1}+k_{2}+4})$, $k_{1}\geq k_{2}\geq -1$, is $4k_{1}-2k_{2}+4$.
\end{proposition}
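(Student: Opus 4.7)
The proof will parallel the structure of Proposition~\ref{p:closedmeander1}, now allowing for two distinguished zeros. The main strategy is to place the $(2k_1+4)$-gon on one side of the horizontal line (say the top) and the $(2k_2+4)$-gon on the opposite side, which the case analysis below will show is the optimal configuration.

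For the upper bound, the plan is to modify the construction of Proposition~\ref{p:closedmeander1}. Starting from the minimal meander in $Q(k_1,-1^{k_1+4})$ with $4k_1+4$ crossings, one modifies the bottom half so that $k_2+1$ of the bigons created there merge into a single $(2k_2+4)$-gon by removing $k_2$ of the connecting arcs that produced those bigons. Each removed arc eliminates two crossings, so this removes $2k_2$ crossings in total, yielding $4k_1-2k_2+4$. One must check that the resulting configuration still has the correct face combinatorics for $Q(k_1,k_2,-1^{k_1+k_2+4})$ and remains a single connected closed curve; this can be done by explicitly tracing the chord matchings, analogously to the concluding construction in the proof of Proposition~\ref{p:closedmeander1}.

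For the lower bound, we distinguish two cases based on where the two special polygons are placed relative to the horizontal line. In the \emph{same-sides case}, the opposite side contains only bigons and squares and is therefore forced, as in Proposition~\ref{p:closedmeander1}, into a nested rainbow configuration (up to cyclic rotation). A face-counting argument together with the connectivity analysis analogous to that of Proposition~\ref{p:closedmeander1} produces a lower bound of $N\geq 4k_1-2k_2+4$. In the \emph{opposite-sides case}, a direct face count forces $N\geq 2k_1+4$, but at this value the top is forced into a parallel-chord configuration realising the $(2k_1+4)$-gon as its outer face. One then shows that every bottom chord matching producing the $(2k_2+4)$-gon together with the required bigons and squares must share at least one chord with the top, creating an isolated $2$-cycle and thus disconnecting the meander. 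An inductive argument, mirroring the claim in Proposition~\ref{p:closedmeander1}, shows that $k_1-k_2$ additional chords must be added on each side to resolve this obstruction, contributing $2(k_1-k_2)$ extra crossings for a total of $4k_1-2k_2+4$.

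The main technical obstacle is the connectivity analysis in the opposite-sides case. Precisely showing that $2(k_1-k_2)$ extra crossings are both necessary and sufficient requires a careful combinatorial accounting of which chord choices force shared chords and how the addition of further chords progressively relaxes the obstruction while maintaining the prescribed face types. This parallels, but is more intricate than, the inductive claim at the end of the proof of Proposition~\ref{p:closedmeander1}, since now the allowed face structure on each side is richer than in the single-zero case.
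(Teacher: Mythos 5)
Your plan identifies the correct answer and the right general mechanism (the shared-chord/disconnection obstruction), but it has two concrete problems. First, the upper-bound construction does not work as described. You propose to start from the minimal meander in $\calQ(k_{1},-1^{k_{1}+4})$ and delete $k_{2}$ arcs so that $k_{2}+1$ bigons merge into a single $(2k_{2}+4)$-gon. But the target stratum $\calQ(k_{1},k_{2},-1^{k_{1}+k_{2}+4})$ has $k_{1}+k_{2}+4$ poles, i.e.\ $k_{2}$ \emph{more} bigons than $\calQ(k_{1},-1^{k_{1}+4})$ has, so a move that consumes $k_{2}+1$ bigons leaves you $2k_{2}+1$ bigons short of the required face list; an Euler count ($N$ crossings force exactly $N-k_{1}-k_{2}-4$ unlabelled square faces) shows that what must actually disappear in passing from $4k_{1}+4$ to $4k_{1}-2k_{2}+4$ crossings are $3k_{2}+1$ squares, not bigons. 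The paper's construction runs in the opposite direction: it starts from the symmetric $(2k_{2}+4)$-crossing meander for $\calQ(k_{2},k_{2},-1^{2k_{2}+4})$ (Figure~\ref{f:closedmeanderproof8}) and \emph{adds} $4(k_{1}-k_{2})$ crossings inside a single bottom rainbow, reducing the verification to Proposition~\ref{p:closedmeander1} applied to $\calQ(k_{1}-k_{2},-1^{k_{1}-k_{2}+4})$.

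Second, the lower bound in your opposite-sides case is deferred exactly where the work is. The parallel-chord top is only forced at $N=2k_{1}+4$; once $N>2k_{1}+4$ the top configuration is no longer pinned down, so ``an inductive argument mirroring Proposition~\ref{p:closedmeander1}'' must rule out every way of distributing the extra arcs between the two sides and among several bottom rainbows, not merely perturbations of the initial forced picture. This is precisely the content of the two Claims in the paper's proof: (i) growing the top zero by adding arcs inside a single bottom rainbow costs four crossings per unit of added order, because connecting the new arcs to both adjacent top bigons forces six crossings without gaining a rainbow (Figure~\ref{f:closedmeanderproof10}); and (ii) spreading the added arcs over several bottom rainbows, or joining arcs from different bottom rainbows by a top arc (Figure~\ref{f:closedmeanderproof12}), never beats working inside a single rainbow. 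Your same-sides case is likewise only asserted, though you deserve credit for naming it explicitly (the paper disposes of it implicitly by placing the $(2k_{2}+4)$-gon on the bottom and forcing the bottom into the rainbow form of Figure~\ref{f:closedmeanderproof9}). As it stands the proposal is a plausible outline whose two load-bearing combinatorial claims are missing and whose explicit realising construction is combinatorially incorrect.
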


\begin{proof}
We first give a construction that realises the claimed minimum number of crossings. We will then prove that this number is indeed minimal.

If $k_{1} = k_{2}$, then the meander in Figure~\ref{f:closedmeanderproof8} realises a closed meander in the stratum $Q(k_{1},k_{1},-1^{2k_{1}+4})$ with $2k_{1}+4 = 4k_{1}-2k_{2}+4$ crossings. If $k_{1} > k_{2}$, we begin by producing a closed meander in $Q(k_{2},k_{2},-1^{2k_{2}+4})$ again using the method in Figure~\ref{f:closedmeanderproof8}. So far, we have $2k_{2}+4$ crossings. Now, we start winding the rightmost upper bigon inside the bottom bigon to its left using the technique of Figure~\ref{f:closedmeanderproof7} until we have added $k_{1}-k_{2}$ additional bigons. See Figure~\ref{f:closedmeanderproof13} which demonstrates the first step of the winding. Each added bigon requires 4 additional crossings, so the final meander has $2k_{2}+4 + 4(k_{1}-k_{2}) = 4k_{1}-2k_{2}+4$ crossings, as desired.

\begin{figure}[t]
\begin{center}
\includegraphics[scale=1]{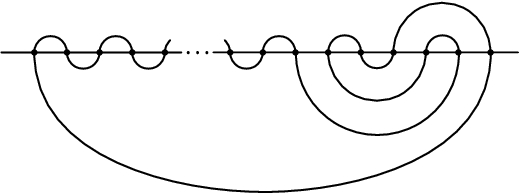}
\end{center}
\caption{A closed meander in $Q(k_{1},k_{2},-1^{k_{1}+k_{2}+4})$ realising $4k_{1}-2k_{2}+4$ crossings.}
\label{f:closedmeanderproof13}
\end{figure}

We now prove that these meanders are minimal. We continue with the assumption that $k_{1}\geq k_{2}\geq -1$ and we will use proof by induction on $k_{1}$.

Our base case is $k_{1} = k_{2}$. In this case the minimum number of crossings is realised by the configuration shown in Figure~\ref{f:closedmeanderproof8}. This is clear since, given a fixed number of crossings, this configuration maximises the number of rainbows (in this case bigons) on each side of the meander. Therefore, the minimum number of crossings for a closed meander in $Q(k_{1},k_{2},-1^{k_{1}+k_{2}+4})$ with $k_{1} = k_{2}$ is $4k_{1}-2k_{2}+4$.

Now suppose that a closed meander in $Q(k_{1},k_{2},-1^{k_{1}+k_{2}+4})$, $k_{1}\geq k_{2}\geq -1$, is minimised with $4k_{1}-2k_{2}+4$ crossings. Consider a minimising closed meander in $Q(k_{1}+1,k_{2},-1^{k_{1}+k_{2}+4})$. A similar `unwinding' process to that used in the proof of Proposition~\ref{p:closedmeander1} (Figure~\ref{f:closedmeanderproof3}) can be used to show that such a minimal closed meander must have one zero on the top of the meander, say the zero of order $k_{1}+1$, and the other, so the zero of order $k_{2}$, on the bottom.

This means that there must be $k_{1}+3$ rainbows (up to cyclic rotation) in the top and $k_{2}+2$ rainbows (up to cyclic rotation) in the bottom. Since $k_{1}\geq k_{2}$, we have $k_{1}+3 > k_{2}+2$. There must therefore be at least one rainbow in the bottom of the meander that is connected to at least three of the rainbows in the top of the meander.

Consider the centre of this bottom rainbow. Up to cyclic rotation of the meander, it looks as in the left of Figure~\ref{f:closedmeanderproof1}. We observe that, if the meander is minimal (in terms of the number of crossings), then both ends of the central arc of the bottom rainbow cannot be contained in the same rainbow in the top of the meander, as in Figure~\ref{f:closedmeanderproof14}. This is because such a configuration arises from a (sequence of) winding move(s) in which either the number of rainbows is not changed or the number of rainbows is decreased. This can be seen by unwinding and seeing that the number of rainbows is unchanged or increased while reducing the number of crossings. In either case, the original meander was not minimal.

\begin{figure}[t]
\begin{center}
\includegraphics[scale=1]{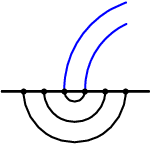}
\end{center}
\caption{Impossible configuration if the meander is minimal.}
\label{f:closedmeanderproof14}
\end{figure}

Therefore, we have two distinct rainbows attached to this central arc in the bottom rainbow. See Figure~\ref{f:closedmeanderproof15}. Since the bottom rainbow connects to at least three rainbows in the top, one of these rainbows, say the blue one, is closed on the bottom rainbow.

If the blue rainbow is a single bigon and the purple rainbow has at least three arcs, then we can reduce the number of crossings and preserve the stratum; that is, the initial meander was not minimal. This is done via the unwinding shown in Figure~\ref{f:closedmeanderproof16}. So if the meander is minimal then the purple rainbow must have two arcs and the same unwinding process (Figure~\ref{f:closedmeanderproof17}) allows us to reduce the zero of order $k_{1}+1$ to a zero of order $k_{1}$ while keeping the zero of order $k_{2}$. In the process, we lose 4 intersections. By the inductive hypothesis, the minimum number of crossings for the resulting meander is $4k_{1}-2k_{2}+4$ and so our original meander had at least $4k_{1}-2k_{2}+4+4 = 4(k_{1}+1)-2k_{2}+4$ crossings. This is realised by the construction above and so, if such a configuration always exists, the minimum number of crossings for a meander in $Q(k_{1}+1,k_{2},-1^{k_{1}+k_{2}+4})$ is indeed $4(k_{1}+1)-2k_{2}+4$, as required.

\begin{figure}[t]
\begin{center}
\includegraphics[scale=1]{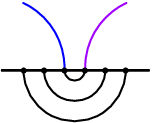}
\end{center}
\caption{Forced configuration at centre of the bottom rainbow.}
\label{f:closedmeanderproof15}
\end{figure}

\begin{figure}[b]
\begin{center}
\includegraphics[scale=1]{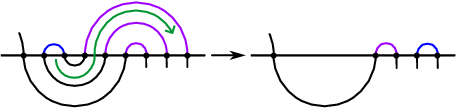}
\end{center}
\caption{Stratum-preserving unwinding.}
\label{f:closedmeanderproof16}
\end{figure}

\begin{figure}[t]
\begin{center}
\includegraphics[scale=1]{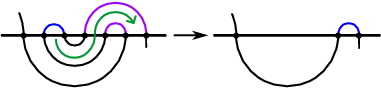}
\end{center}
\caption{Stratum-reducing unwinding. The zero of order $k_{1}+1$ reduces to a zero of order $k_{1}$.}
\label{f:closedmeanderproof17}
\end{figure}

We will now rule out the possibility that the blue rainbow can have more than a single arc. In each case, we will argue that the initial meander was not minimal. So, suppose that the blue rainbow has at least two arcs and that the purple rainbow has a strictly greater number of arcs (if it has strictly fewer, then it must also be closed on the bottom rainbow and symmetric arguments to the below apply, and if they have the same number of arcs then they will form a disjoint connected component of the initial meander). Suppose that the central arc of the purple rainbow lies strictly to the left of the red arc in Figure~\ref{f:closedmeanderproof18}. Note that this red arc does exist since the bottom rainbow connects to at least three rainbows in the top. We can then perform a stratum-preserving unwinding as follows. Firstly, unwind the bigon corresponding to the central arc of the purple rainbow as shown in Figure~\ref{f:closedmeanderproof18}. Finally, unwind the resulting sub-rainbow of the blue rainbow that lies to the left of the unwound purple central arc. This reduces the number of intersections but preserves the stratum and so the initial meander was not minimal.

\begin{figure}[t]
\begin{center}
\includegraphics[scale=1]{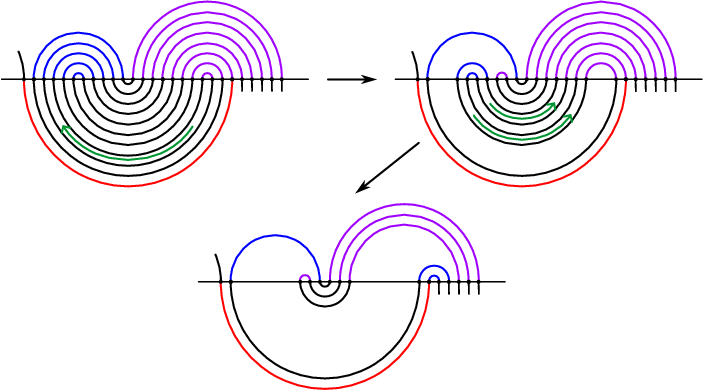}
\end{center}
\caption{Stratum-preserving unwinding if the blue rainbow is not a bigon and the centre of the purple rainbow lies to the left of the red arc.}
\label{f:closedmeanderproof18}
\end{figure}

\begin{figure}[t]
\begin{center}
\includegraphics[scale=1]{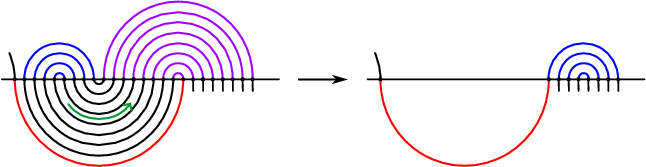}
\end{center}
\caption{Stratum-reducing unwinding if the blue rainbow is not a bigon.}
\label{f:closedmeanderproof19}
\end{figure}

Now suppose that the central arc of the purple rainbow has its right endpoint on the red arc. In this situation, we can unwind and reduce the order of the top zero from $k_{1}+1$ to $k_{1}$. See Figure~\ref{f:closedmeanderproof19}. Since the blue rainbow has at least two arcs, this reduces the number of crossings by at least 8. The inductive hypothesis gives us that the resulting meander has at least $4k_{1}-2k_{1}+4$ crossings so our original meander had at least $4k_{1}-2k_{1}+4 + 8 = 4(k_{1}+1)-2k_{1}+4+4$ which is not minimal. Finally, if the left endpoint of the central arc of the purple rainbow lies on or to the right of the red arc then a stratum-preserving unwinding can again be performed and so the initial meander was not minimal. See Figure~\ref{f:closedmeanderproof20}.

\begin{figure}[t]
\begin{center}
\includegraphics[scale=1]{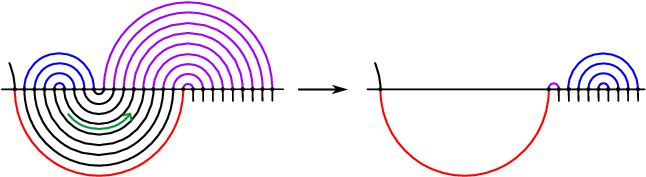}
\end{center}
\caption{Stratum-preserving unwinding if the blue rainbow is not a bigon and the centre of the purple rainbow lies to the right of the red arc.}
\label{f:closedmeanderproof20}
\end{figure}

To conclude, if the meander in stratum $Q(k_{1}+1,k_{2},-1^{k_{1}+k_{2}+4})$ is minimal, then it must contain the configuration shown in Figure~\ref{f:closedmeanderproof17} in which case, by induction, it has at least $4(k_{1}+1)-2k_{2}+4$ crossings which is realisable and so minimal.
\end{proof}

\subsection{Minimal singly-anchored open meanders}

Here, we handle singly-anchored open meanders.

\begin{proposition}\label{p:saom}
The minimum number of crossings required to construct a singly-anchored open meander in $Q(k_{1},k_{2},-1^{k_{1}+k_{2}+4})$, $k_{1}\geq k_{2}\geq -1$, is $\max\{2k_{1}+2,2k_{2}+3\}$.
\end{proposition}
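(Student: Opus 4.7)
The plan is to combine two independent lower bounds with explicit constructions. Fix a singly-anchored open meander $(\alpha,\beta)$ in $\calQ(k_1,k_2,-1^{k_1+k_2+4})$ with $N$ crossings, where $\alpha$ is the horizontal closed curve and $\beta$ is the anchored arc with valence-one endpoints $p_1,p_2$. The associated planar graph on the sphere has $V=N+2$ and $E=2N+1$, hence $F=N+1$ complementary regions. Two faces must carry the zeros of orders $k_1$ and $k_2$, the endpoints $p_1,p_2$ contribute two of the $k_1+k_2+4$ poles, the remaining $k_1+k_2+2$ poles come from bigon faces, and any extra faces are regular $4$-gons (order-$0$ zeros). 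Counting faces gives the first lower bound $N\geq k_1+k_2+3$.

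The more delicate lower bound is $N\geq 2k_1+2$. I would focus on a face $F_1$ of order $k_1$, a $(2k_1+4)$-gon in the merge-rule sense. Since $\alpha$ separates the sphere into two disks and $F_1$ is connected, $F_1$ lies entirely in one of them, call it $D$. The only bridges of the meander graph are the two end-segments $s_0$ and $s_N$ of $\beta$ (incident to $p_1$ and $p_2$), so no regular edge has both sides in the same face. Traversing the boundary walk of $F_1$, the edge type (``arc'' versus ``closed-curve'') switches at each valence-four crossing visited but is preserved at each valence-one reflection; moreover, between two reflection visits within a run the walk necessarily traverses the same edge twice. Consequently, each maximal run of edges of a fixed type along the cyclic boundary walk corresponds to a single distinct edge of $F_1$. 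Since the number of type switches equals the side count $2k_1+4$, the walk splits into $k_1+2$ arc-runs and $k_1+2$ closed-curve-runs, and so $F_1$ has exactly $k_1+2$ distinct arc edges on its boundary. These are $k_1+2$ arc segments of $\beta$ contained in $D$. As the $N+1$ segments of $\beta$ strictly alternate between the two sides of $\alpha$, at most $\lfloor N/2\rfloor+1$ of them lie in $D$, giving $k_1+2\leq\lfloor N/2\rfloor+1$ and hence $N\geq 2k_1+2$. Combining this with the Euler bound under the assumption $k_1\geq k_2$ yields $N\geq\max\{k_1+k_2+3,\,2k_1+2\}=\max\{2k_2+3,\,2k_1+2\}$.

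For the matching upper bound I would exhibit explicit minimal constructions. The middle picture in Figure~\ref{f:meanderstrata} already realises the case $k_1=k_2=1$ with $5=2k+3$ crossings; for general $k_1=k_2=k$ I would generalise this configuration by nesting $k-1$ additional arc pairs inside the single rainbow on one side of $\alpha$, producing both order-$k$ zeros simultaneously in direct analogy with the equal-zero case of Proposition~\ref{p:closedmeander2}. For $k_1>k_2$, I would begin with the optimal $(k_2,k_2)$ singly-anchored configuration on $2k_2+3$ crossings and add $k_1-k_2$ nested arcs inside one rainbow to inflate the larger zero, exactly mirroring the reduction to $\calQ(k_1-k_2,-1^{k_1-k_2+4})$ carried out inside Proposition~\ref{p:closedmeander2}; a direct count produces $2k_1+2$ crossings. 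The main obstacle is the boundary-walk analysis: when $F_1$ contains one or both of $p_1,p_2$ the walk revisits the same crossing at multiple wedges, so one must track merged sides together with the possibility of slits. The bridge observation streamlines the bookkeeping into the clean identity that $F_1$ has the same number of arc edges as closed-curve edges on its boundary, namely $k_1+2$.
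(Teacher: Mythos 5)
Your two lower bounds are correct, and the boundary-walk argument is genuinely sharper than what the paper records: the paper merely asserts that minimality ``can be argued almost exactly as in the case of Proposition~\ref{p:closedmeander1}'', whereas your observation that the order-$k_{1}$ face carries exactly $k_{1}+2$ distinct arc edges, all lying in a single complementary disk of the closed curve $\alpha$ which contains at most $\lfloor N/2\rfloor+1$ of the $N+1$ segments of $\beta$, pins down $N\geq 2k_{1}+2$ cleanly. The bridge observation, the Euler count $N\geq k_{1}+k_{2}+3$, and the identity $\max\{k_{1}+k_{2}+3,\,2k_{1}+2\}=\max\{2k_{2}+3,\,2k_{1}+2\}$ for $k_{1}\geq k_{2}$ are all fine.

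The gap is in your upper bound when $k_{1}>k_{2}$. A nested arc inserted inside a rainbow is a chord of a complementary disk of $\alpha$ and therefore meets $\alpha$ in exactly two new points, so adding $k_{1}-k_{2}$ such arcs to the $(k_{2},k_{2})$ configuration gives $2k_{2}+3+2(k_{1}-k_{2})=2k_{1}+3$ crossings, one more than claimed; the ``direct count'' you invoke is not carried out, and the natural one overshoots. In fact no insertion of interior nested arcs can succeed: the parity of the crossing number is determined by whether the two endpoints of $\beta$ lie in the same or in different complementary disks of $\alpha$, and passing from the odd value $2k_{2}+3$ to the even value $2k_{1}+2$ forces an endpoint of $\beta$ to be pushed across $\alpha$. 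That is precisely the device the paper uses (the winding procedure of Figure~\ref{f:saproof2}): because $\beta$ is anchored, its terminal segment can be extended through $\alpha$ one crossing at a time, and spiralling this free end adds the odd number $2(k_{1}-k_{2})-1$ of crossings, creating the $k_{1}-k_{2}$ new bigons and $k_{1}-k_{2}-1$ new $4$-gons while enlarging only the order-$k_{1}$ face, landing exactly on $2k_{1}+2$. Without this use of the free endpoint your construction leaves a gap of one crossing between your (correct) lower bound and your upper bound in every case with $k_{1}>k_{2}$.
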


\begin{figure}[b]
\begin{center}
\includegraphics[scale=0.9]{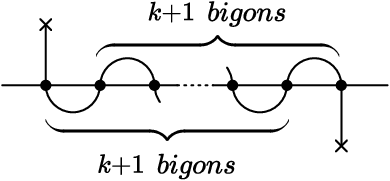}
\end{center}
\caption{Minimally constructing a singly-anchored open meander in $\calQ(k,k,-1^{2k+4})$.}
\label{f:saproof1}
\end{figure}

\begin{proof}
It is easily checked that the configuration in Figure~\ref{f:saproof1} minimally realises a singly-anchored open meander in the stratum $\calQ(k,k,-1^{2k+4})$ using $2k+3$ crossings. Now suppose that $k_{1}>k_{2}$. It can then be argued almost exactly as in the case of Proposition~\ref{p:closedmeander1} above that the winding procedure shown in Figure~\ref{f:saproof2} minimally realises a singly-anchored open meander in $\calQ(k_{1},k_{2},-1^{k_{1}+k_{2}+4})$. We see that we have to add $2(k_{1}-k_{2}-1)+1 = 2k_{1}-2k_{2}-1$ crossings to achieve this. In total, we have used $2k_{1}-2k_{2}-1+2k_{2}+3 = 2k_{1}+2$ crossings. Since we are assuming that $k_{1}\geq k_{2}$, we have that $2k_{2}+3 \geq 2k_{1}+2$ if and only if $k_{1}=k_{2}$ in which case $2k_{2}+3 > 2k_{1}+2$. Otherwise, $k_{1}>k_{2}$ and $2k_{1}+2 > 2k_{2}+3$. The claim of the proposition follows.
\end{proof}

\begin{figure}[t]
\begin{center}
\includegraphics[scale=1]{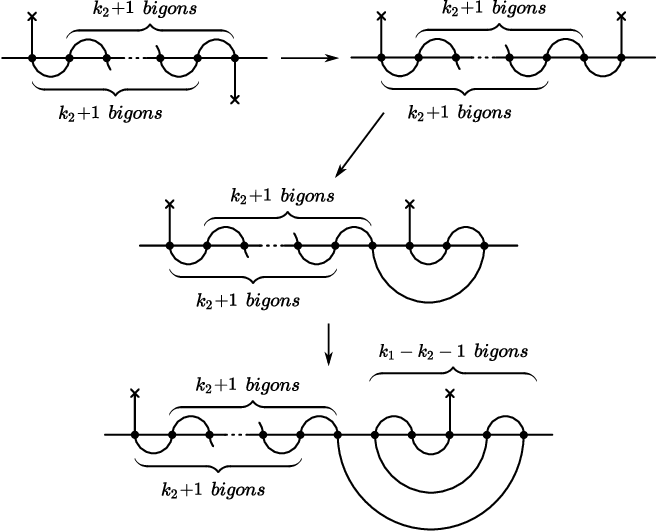}
\end{center}
\caption{Winding to achieve a singly-anchored open meander in $\calQ(k_{1},k_{2},-1^{k_{1}+k_{2}+4})$.}
\label{f:saproof2}
\end{figure}

\subsection{Minimal doubly-anchored open meanders and semi-meanders}

Here, we handle doubly-anchored open meanders and semi-meanders.

\begin{proposition}\label{p:daom}
The minimum number of crossings required to construct a doubly-anchored open meander in $Q(k,-1^{k+4})$, $k\geq 0$, is $k+1$.
\end{proposition}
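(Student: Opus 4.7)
The plan is to establish the lower bound by an Euler characteristic count and then realise it with an explicit zigzag construction. I would first regard the doubly-anchored open meander as a graph embedded in the sphere with $V = n+4$ vertices (the $n$ four-valent crossings together with the four valence-one anchor endpoints) and $E = 2n+2$ edges, since each of the two arcs is cut into $n+1$ pieces by its $n$ crossings and its two anchor endpoints. The graph is connected for $n\geq 1$, so sphere Euler gives $F = 2-V+E = n$ faces.

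I would then translate the stratum condition $\calQ(k,-1^{k+4})$ into a constraint on this face vector. Each valence-one anchor contributes a zero of order $-1$, so the four anchors already account for four of the $k+4$ poles, leaving $k$ poles that must come from bigon faces, while the unique zero of order $k$ must arise from a single $(2k+4)$-gon face. Every face has an even side count (the boundary traversal alternates between an $H$-edge and a $T$-edge at each crossing and doubles back along the same arc at each spike, giving a consistent cyclic $\{H,T\}$-word whose length is therefore even), so any additional face is a $(2m+4)$-gon with $m\geq 0$; to remain in $\calQ(k,-1^{k+4})$ such an extra face is only permitted when $m=0$, i.e.\ when it is a $4$-gon corresponding to a regular point not recorded in the stratum. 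Writing $n = 1 + k + a$ with $a\geq 0$ the number of these regular $4$-gons then yields the lower bound $n \geq k+1$, with equality only when $a=0$.

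To attain equality I would place $k+1$ crossings $p_1,\ldots,p_{k+1}$ in order along $H$, position the two anchors of $H$ just outside $p_1$ and $p_{k+1}$, and draw $T$ so that, starting from an anchor placed above $H$, it passes through $p_1,\ldots,p_{k+1}$ in order while alternating between small below-$H$ and above-$H$ arcs between consecutive crossings, and terminates at a second anchor whose side of $H$ is fixed by the parity of $k$. The arc $T$ is embedded because the above- and below-arcs have pairwise disjoint horizontal supports; each of the $k$ inter-crossing $T$-segments pairs with the corresponding $H$-segment to bound a bigon, producing exactly $k$ bigon faces; and the one remaining face is bounded by the $k$ non-anchor $H$-edges, the $k$ non-anchor $T$-edges and the four anchor-incident edges (each contributing a single side under the valence-one convention), for a total of $k+k+4 = 2k+4$ sides. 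Thus this meander lies in $\calQ(k,-1^{k+4})$ with exactly $k+1$ crossings, attaining the bound. The main subtlety I expect in the write-up is the side counting around the four spikes inside the $(2k+4)$-gon, where the valence-one convention must be applied precisely so that the big face contributes exactly $2k+4$ sides and not some larger even number; once this bookkeeping is secured, the rest is routine.
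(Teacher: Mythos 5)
Your argument is correct, and it is worth noting that it is substantially more rigorous than the paper's own treatment. The paper's proof of Proposition~\ref{p:daom} consists of exhibiting the same zigzag configuration (Figure~\ref{f:daproof1}) and asserting that it is ``clear'' that this is minimal because it ``maximises the number of bigons added per crossing'': one needs $k$ bigons and each crossing after the first creates at most one. Your Euler-characteristic count is a clean formalisation of exactly that heuristic: with $V=n+4$, $E=2n+2$ and the graph connected, the sphere has $F=n$ faces, and since the stratum condition forces the face multiset to consist of exactly $k$ bigons (the remaining $k$ poles after the four anchors), one $(2k+4)$-gon, and possibly some regular $4$-gons, you get $n\geq k+1$ directly. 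The one place to be careful --- and you flag it yourself --- is the side-counting convention at the four valence-one spikes; your bookkeeping ($\sum_f s_f = 2(E-4)+4$, so the face orders sum to zero and the leftover face in the zigzag is forced to be the $(2k+4)$-gon) is consistent with the paper's convention from Section~\ref{s:meanders}. The upper-bound construction you give is the same wave as the paper's. What your route buys is a lower bound that does not rest on an unproved ``at most one bigon per crossing'' claim; what the paper's route buys is brevity, which is defensible here since the statement is indeed the easiest of the four minimality propositions.
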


\begin{proof}
It is clear that the minimal construction of a doubly-anchored open meander in $\calQ(k,-1^{k+4})$ is given by the configuration shown in Figure~\ref{f:daproof1}. Indeed, this construction maximises the number of bigons added per crossing. We see that we require $k$ bigons for a zero of order $k$ which can be optimally achieved by $k+1$ crossings.
\end{proof}

\begin{figure}[t]
\begin{center}
\includegraphics[scale=0.9]{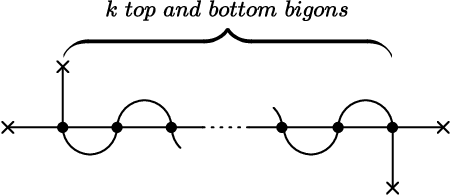}
\end{center}
\caption{Minimal construction of a doubly-anchored open meander in $\calQ(k,-1^{k+4})$.}
\label{f:daproof1}
\end{figure}

\begin{proposition}\label{p:daosm}
The minimum number of crossings required to construct a doubly-anchored open semi-meander in $Q(k_{1},k_{2},-1^{k_{1}+k_{2}+4})$, $k_{1}\geq k_{2}\geq 0$, is $k_{1}+k_{2}+2$.
\end{proposition}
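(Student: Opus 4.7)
The plan is to prove the proposition in two steps: an Euler-characteristic lower bound that fixes the crossing count on combinatorial grounds, and an explicit construction realising that count by fusing two copies of the minimal configuration from Proposition~\ref{p:daom}.

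\textbf{Lower bound via Euler's formula.} The first step is to view a doubly-anchored open semi-meander in $\calQ(k_1,k_2,-1^{k_1+k_2+4})$ with $n$ transverse crossings as a planar graph on the sphere. This graph has $V = n + 4$ vertices (the $n$ four-valent crossings together with the four valence-one anchor points at the endpoints of the horizontal ray and of the transverse arc) and $E = (4n+4)/2 = 2n+2$ edges, so Euler's formula forces $F = 2 - V + E = n$ faces. On the other hand, to realise the stratum, the complementary regions must consist of one $(2k_1+4)$-gon, one $(2k_2+4)$-gon, and $k_1+k_2$ bigons, with the remaining four $-1$ singularities supplied by the valence-one anchor vertices themselves. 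Hence $F = k_1+k_2+2$, and therefore $n \geq k_1+k_2+2$.

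\textbf{Matching construction.} For the upper bound, the plan is to adapt the single-rainbow configuration of Proposition~\ref{p:daom} (Figure~\ref{f:daproof1}) by exploiting the semi-meander freedom to pass around an endpoint of the horizontal ray. Specifically, the transverse arc first executes $k_1+1$ zigzag crossings on one side of a chosen endpoint of the horizontal ray, producing $k_1$ bigons and one $(2k_1+4)$-gon; it then wraps once around that endpoint (contributing no new crossings); and finally executes $k_2+1$ further zigzag crossings on the other side, producing $k_2$ bigons and one $(2k_2+4)$-gon, before terminating at its second anchor. The total is $(k_1+1)+(k_2+1) = k_1+k_2+2$ crossings, matching the lower bound, and the resulting facial structure places the configuration in the claimed stratum.

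\textbf{Main obstacle.} The principal point to verify is that the wrap-around really does glue the two rainbow patches into a single connected configuration with precisely the required polygonal combinatorics, so that the two large faces remain separated and acquire exactly the sides $2k_1+4$ and $2k_2+4$. The Euler computation above serves as a consistency check---once the stratum and the crossing count are fixed, the sum of polygon sides $2E - V_1 = 4n$ is forced---so the verification reduces to a careful local inspection of the figure at the wrap-around endpoint and the two outermost zigzag crossings on either side of it.
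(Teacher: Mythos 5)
Your argument is correct and reaches the right count, but your lower bound is genuinely different from the paper's. The paper establishes minimality by inspection of its Figure~\ref{f:daproof2}, with the implicit justification (inherited from Proposition~\ref{p:daom}) that the configuration maximises the number of bigons added per crossing; you instead argue globally: with $n$ crossings and four valence-one anchors, Euler's formula gives exactly $F=n$ complementary regions, and the stratum forces at least $k_{1}+k_{2}$ bigon faces (the other four poles being the anchors) plus two further faces for the two zeros, so $n\geq k_{1}+k_{2}+2$. This is cleaner and more robust than the paper's local count, and it specialises at once to re-derive Proposition~\ref{p:daom}. One logical nick: the complementary regions must \emph{include} the listed polygons rather than \emph{consist of} them --- any further faces are squares, i.e.\ unmarked regular points --- so the correct intermediate statement is $F\geq k_{1}+k_{2}+2$ rather than $F=k_{1}+k_{2}+2$; the inequality you need survives unchanged. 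Your realising configuration is essentially the paper's Figure~\ref{f:daproof2}: a snake of $k_{1}+1$ crossings, a wrap around the ray's finite endpoint, then a snake of $k_{2}+1$ crossings. Here the phrase ``on the other side'' of that endpoint is imprecise, since the ray emanates from it in only one direction --- all crossings lie on the single ray, and it is the wrap-around arc itself that separates the two large faces; and, like the paper, you leave to inspection the check that those two faces acquire exactly $2k_{1}+4$ and $2k_{2}+4$ sides (your identity $2E-4=4n$ pins down only their total). Neither point affects correctness.
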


\begin{proof}
Similar to the proof of the previous proposition, it can be checked that the minimal construction of a doubly-anchored open semi-meander in the stratum $\calQ(k_{1},k_{2},-1^{k_{1}+k_{2}+4})$ is given by the configuration shown in Figure~\ref{f:daproof2}. We see that this construction requires 2 crossings to set up the initial semi-meander, then $k_{1}$ additional crossing to produce the $k_{1}$ bigons and $k_{2}$ additional crossings to produce the other $k_{2}$ bigons.
\end{proof}

\begin{figure}[t]
\begin{center}
\includegraphics[scale=1]{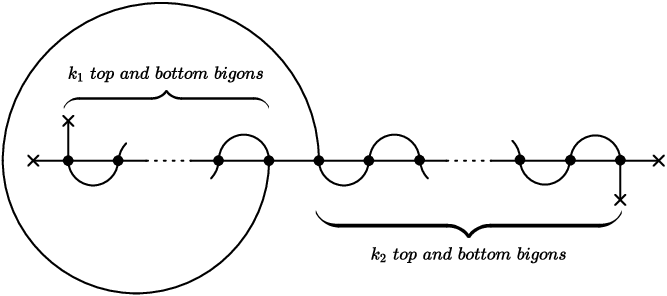}
\end{center}
\caption{Minimal construction of a doubly-anchored open semi-meander in $\calQ(k_{1},k_{2},-1^{k_{1}+k_{2}+4})$.}
\label{f:daproof2}
\end{figure}

\subsection{Lifting to hyperelliptic connected components}

Here, we will give the final propositions that complete the proof of Theorem~\ref{th:main}.

Note that the non-separating core curve cases of the following propositions were already proved by the author~\cite{J0,J1} but we reprove them here in the formalised setting of meander lifting.

We first require a technical lemma concerning lifts of curves via the covering map associated to a hyperelliptic involution. It is well-known (for example, it is used in work of Brendle-Margalit-Putman~\cite{BMP} on the hyperelliptic Torelli group), but we include a proof here for completeness.

\begin{figure}[t]
\begin{center}
\includegraphics[scale=0.9]{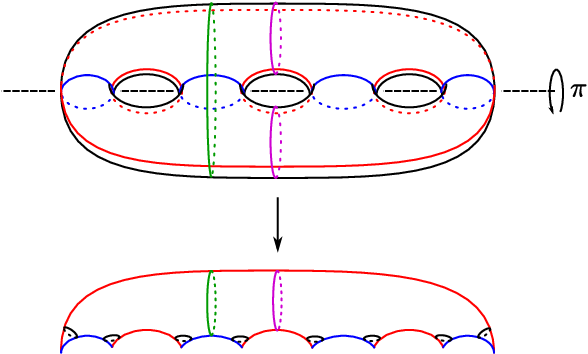}
\end{center}
\caption{Symmetric lifts of curves via a hyperelliptic involution.}
\label{f:lemma}
\end{figure}

\begin{lemma}\label{l:lift}
Let $\tau:S_{g}\to S_{0,2g+2}$ be the double cover associated to a hyperelliptic involution $\iota:S_{g}\to S_{g}$. A closed curve on the punctured sphere $S_{0,2g+2}$ lifts through $\tau$ to a symmetric separating curve on $S_{g}$ if and only if it encloses an odd number of punctures.
\end{lemma}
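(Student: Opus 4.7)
The plan is to analyse the cover $\tau: S_g\to S_{0,2g+2}$ via its monodromy. Since the cover is a double cover branched precisely at the $2g+2$ punctures of $S_{0,2g+2}$, restricting to the unpunctured sphere it is a degree-2 unbranched cover, and hence is classified by a surjective homomorphism $\phi: \pi_1(S_{0,2g+2})\to \mathbb{Z}/2\mathbb{Z}$. Because each puncture is a branch point of order $2$, $\phi$ sends a small loop around each puncture to the nontrivial element. Thus for any closed curve $\gamma\subset S_{0,2g+2}$ (transverse to the branch locus) the value of $\phi([\gamma])$ is exactly the parity of the number of punctures enclosed by $\gamma$.

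First I would argue the connectedness half. The preimage $\tau^{-1}(\gamma)$ is always $\iota$-invariant as a set. If $\phi([\gamma])=0$, i.e.\ an even number of punctures is enclosed, then the cover restricted to $\gamma$ is trivial, so $\tau^{-1}(\gamma)$ consists of two disjoint curves that are swapped by $\iota$; neither component is symmetric, and there is no connected symmetric lift. If $\phi([\gamma])=1$, i.e.\ an odd number of punctures is enclosed, then $\tau^{-1}(\gamma)$ is a connected double cover of $\gamma$, which is a single closed curve $\tilde\gamma$, necessarily $\iota$-invariant and hence symmetric.

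Next I would check the separating half in the odd case. Write $S_{0,2g+2}\setminus\gamma=D_1\sqcup D_2$ where $D_i$ is an open disk containing $k_i$ punctures, with $k_1$ odd and $k_2=2g+2-k_1$ also odd. The boundary of each $D_i$ has nontrivial monodromy, so $\tau^{-1}(\overline{D_i})$ is a connected surface with a single boundary component (equal to $\tilde\gamma$). A Riemann--Hurwitz / Euler-characteristic computation gives $\chi(\tau^{-1}(\overline{D_i}))=2(1-k_i)+k_i=2-k_i$, so $\tau^{-1}(\overline{D_i})$ is a surface of genus $(k_i-1)/2$ with one boundary component. These two surfaces are glued along $\tilde\gamma$ to recover $S_g$ (and indeed $(k_1-1)/2+(k_2-1)/2=g$), so $\tilde\gamma$ separates $S_g$, proving the ``if'' direction.

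For the converse, I have already observed that when the enclosed number of punctures is even no lift is a single symmetric curve, so in particular no lift is a symmetric separating curve. This completes both directions. The main subtlety I anticipate is phrasing: interpreting ``lifts to a symmetric separating curve'' to mean that $\tau^{-1}(\gamma)$ is a single curve with the stated properties, and making sure the Euler-characteristic computation is carried out for the branched cover of the closed disk rather than the punctured one, so that the two sides genuinely glue to give $S_g$ along $\tilde\gamma$.
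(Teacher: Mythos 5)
Your proof is correct, and it takes a noticeably different route from the one in the paper. The paper's argument normalises $\gamma$ by a change of coordinates to a standard curve, reads off connectedness of the lift from the parity of its intersections with a fixed system of arcs whose preimages separate the two sheets, sees that the standard lift is separating directly from the picture, and then invokes the Birman--Hilden correspondence to transfer the conclusion to an arbitrary curve. You instead work intrinsically: you identify the mod $2$ monodromy homomorphism $\phi$ classifying the cover (each peripheral loop mapping to the nontrivial element), so that $\phi([\gamma])$ is literally the parity of the number of enclosed punctures; connectedness of $\tau^{-1}(\gamma)$ and the fact that two components must be swapped by $\iota$ then fall out of general covering-space theory, and the separating conclusion comes from a Riemann--Hurwitz computation on the two complementary disks, with the genus count $(k_1-1)/2+(k_2-1)/2=g$ confirming consistency. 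The two arguments rest on the same underlying fact --- the arcs in the paper's figure are dual to your class $\phi$, so counting intersections with them mod $2$ is computing $\phi([\gamma])$ --- but your version avoids both the choice of a standard model and the appeal to Birman--Hilden, at the cost of replacing a transparent picture with an Euler characteristic calculation. Both are complete; yours is the more self-contained and arguably generalises more readily (e.g.\ to other branched double covers), while the paper's is shorter given that the Birman--Hilden machinery is standard in this context. Your closing caveats (interpreting ``lifts to'' as the full preimage being a single curve, and computing $\chi$ for the branched cover of the closed disk) are exactly the right points to be careful about, and you handle them correctly.
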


\begin{proof}
$\Leftarrow$: Let $\gamma$ be a simple closed curve on $S_{0,2g+2}$ that encloses an odd number of punctures. Up to change of coordinates, we can take $\gamma$ to be a `standard' curve that the separates the punctures into $2k+1$ on one side and $2g+1-2k$ on the other. See the green curve in the bottom of Figure~\ref{f:lemma} for an example. This lifts under $\tau$ to a symmetric separating curve separating a genus $k$ surface on one side and a genus $g-k$ surface on the other. Indeed, observe that in Figure~\ref{f:lemma} $\gamma$ makes an odd number of intersections with the blue arcs on the sphere whose images upstairs separate the two sheets of the cover. So a single `unfolding' will not be a closed curve. By the Birman-Hilden correspondence~\cite{BH1,BH2}, the change of coordinates homeomorphism on $S_{0,2g+2}$ lifts to a homeomorphism of $S_{g}$ that commutes with the hyperelliptic involution, and so the argument holds for any curve $\gamma$ enclosing an odd number of punctures.

$\Rightarrow$: We prove the contrapositive. Take a simple closed curve $\gamma$ on $S_{0,2g+2}$ that encloses an even number of punctures. Up to change of coordinates, we can take it to be the standard curve that separates say $2k$ and $2g+2-2k$ on either side. This lifts through $\tau$ to a pair of non-separating curves that are symmetric to one another under $\iota$. Indeed, we have an even number of intersections with the blue arcs now. Again, the same Birman-Hilden correspondence argument holds.
\end{proof}

\begin{figure}[t]
\begin{center}
\includegraphics[scale=0.85]{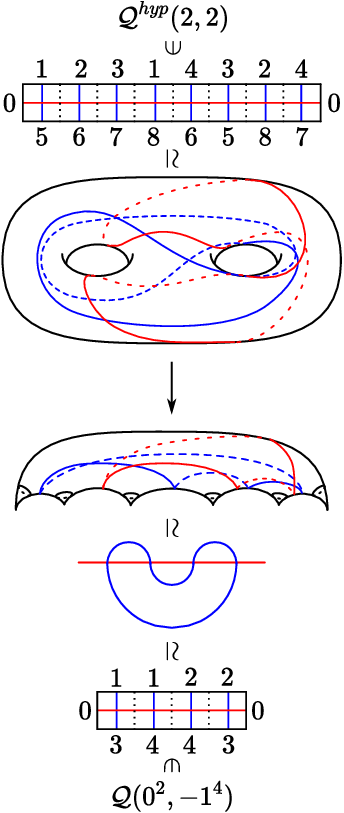}
\end{center}
\caption{An example of lifting a meander in $\calQ(0^{2},-1^{4})$ to a $[1,1]$-pillowcase cover in $\Qhyp(2,2)$ where both core curves are separating.}
\label{f:lift-ex}
\end{figure}

\begin{remark}
We note that the above lemma also holds for the case of a twice punctured surface whose punctures are symmetric under the hyperelliptic involution. In this case, the curve must separate into two sets of odd size those punctures that are the images of the Weierstrass points. This version is relevant to the constructions in hyperelliptic components with two poles.
\end{remark}

An example of the lifting process used to produce a $[1,1]$-pillowcase cover in $\Qhyp(2,2)$ with both core curves separating is shown in Figure~\ref{f:lift-ex}. Recall that a hyperelliptic $[1,1]$-pillowcase cover in $\Qhyp(2,2)$ is a double cover of a $[1,1]$-pillowcase cover in $\calQ(0^{2},-1^{4})$. The minimal construction of a closed meander in this stratum is given by a closed meander with four crossings which lifts to a filling pair with eight crossings. Hence, we see that the minimum number of squares required for a $[1,1]$-pillowcase cover with both core curves separating in $\Qhyp(2,2)$ is eight.

\begin{proposition}\label{p:lift1}
Let $g\geq 2$. The minimal number of squares required for a $[1,1]$-pillowcase cover in the hyperelliptic connected components $\hyp(2g-2)$ and $\hyp(g-1,g-1)$ are $4g-4$ and $4g-2$, respectively. Both of the core curves are forced to be non-separating in this setting.
\end{proposition}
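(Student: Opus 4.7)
The plan is to invoke the meander lifting framework developed at the end of Section~\ref{s:meanders}: a $[1,1]$-pillowcase cover in a hyperelliptic component descends through the hyperelliptic involution to a meander on the sphere, and conversely a meander in the appropriate genus zero stratum lifts to such a cover upstairs. Because $\hyp(2g-2)$ and $\hyp(g-1,g-1)$ are components of $\calH$, any $[1,1]$-pillowcase cover in them is a square-tiled surface, so by the discussion in Section~\ref{s:QDs} both of its core curves are forced to be non-separating. Being both symmetric and non-separating, each core curve descends to an arc on the sphere, so the associated meander is a doubly-anchored open meander.

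Reading off the covering data from Subsection~\ref{s:hypcomp}, the meanders associated to $\hyp(2g-2)$ and $\hyp(g-1,g-1)$ live in $\calQ(2g-3,-1^{2g+1})$ and $\calQ(2g-2,-1^{2g+2})$ respectively. Applying Proposition~\ref{p:daom} with $k=2g-3$ and $k=2g-2$ gives minimum crossing counts of $2g-2$ and $2g-1$. Each meander crossing is an interior point of the sphere away from the Weierstrass points, hence unramified under the double cover and lifting to exactly two intersections; consequently the number of squares of any such $[1,1]$-pillowcase cover equals twice the number of crossings of its descended meander. The lower bounds of $4g-4$ and $4g-2$ squares then follow immediately from Proposition~\ref{p:daom}.

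For the upper bounds, the next step is to take the minimal doubly-anchored open meanders produced in the proof of Proposition~\ref{p:daom}, apply the double cover branched over the $2g+1$ (respectively $2g+2$) poles together with, in the first case, the odd-order zero, and verify using the standard pullback formula for quadratic differentials under a ramified double cover that the resulting surface is an Abelian differential with the correct zeros (a single zero of order $2g-2$ in the first case, two zeros of order $g-1$ in the second), placing it in the named hyperelliptic component by Subsection~\ref{s:hypcomp}. The main obstacle is verifying that the lifted surface is genuinely a \emph{translation} surface rather than merely a half-translation surface --- equivalently, that the lifted filling pair has all intersections of the same orientation --- and that the cover is connected; these properties are checked directly from the concrete construction in Figure~\ref{f:daproof1}, with connectedness of the two lifted curves following from Lemma~\ref{l:lift} applied to each descended arc after closing it into a curve on the sphere via its anchoring punctures.
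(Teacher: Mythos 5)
Your proposal follows the paper's proof essentially step for step: both core curves are forced to be non-separating because the surfaces are square-tiled, they descend to a doubly-anchored open meander in $\calQ(2g-3,-1^{2g+1})$ or $\calQ(2g-2,-1^{2g+2})$, and Proposition~\ref{p:daom} plus the doubling of crossings under the unramified part of the cover gives the bounds $4g-4$ and $4g-2$. The one step the paper makes explicit that you skip is ruling out the \emph{semi-meander} descent: ``both core curves descend to arcs'' is equally true of a doubly-anchored open semi-meander, and the paper excludes that configuration by observing that all side identifications in $\hyp(2g-2)$ and $\hyp(g-1,g-1)$ are by translation; without this (or a separate crossing-count comparison against Proposition~\ref{p:daosm}) your lower bound has a small logical hole. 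Your extra care about the upper bound --- checking that the lift of the minimal meander of Figure~\ref{f:daproof1} is a connected translation surface in the right component --- is a reasonable elaboration of what the paper leaves implicit.
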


\begin{proof}
Since both of the core curves are non-separating they descend under the hyperelliptic involution to two arcs on the punctured sphere. As discussed in Subsection~\ref{s:hypcomp}, the hyperelliptic components $\hyp(2g-2)$ and $\hyp(g-1,g-1)$ are realised by lifting quadratic differentials in $\calQ(2g-3,-1^{2g+1})$ and $\calQ(2g-2,-1^{2g+2})$, respectively. This in turn means that that the arcs on the punctured sphere form doubly-anchored open meanders in the strata $\calQ(2g-3,-1^{2g+1})$ and $\calQ(2g-2,-1^{2g+2})$, respectively. Note that, since all the side identifications for a pillowcase cover in $\hyp(2g-2)$ or $\hyp(g-1,g-1)$ are made by translation, the core curves do not descend to a doubly-anchored open semi-meander. By Proposition~\ref{p:daom}, a doubly-anchored open meander in the stratum $\calQ(k,-1^{k+4})$, $k\geq 0$, requires at least $k+1$ crossings. So a doubly-anchored open meander in $\calQ(2g-3,-1^{2g+1})$ requires at least $2g-2$ crossings and a doubly-anchored meander in $\calQ(2g-2,-1^{2g+2})$ requires at least $2g-1$ crossings. Therefore, the number of intersections between the core curves (which are lifts by a double cover) are $4g-4$ for $\hyp(2g-2)$ and $4g-2$ for $\hyp(g-1,g-1)$. Since the number of squares of the $[1,1]$-pillowcase covers are at least the number of intersections of the core curves, we are done.
\end{proof}

\begin{proposition}\label{p:lift2}
Let $k\geq j\geq 0$. The minimal number of squares required for a $[1,1]$-pillowcase cover in the hyperelliptic connected component $\Qhyp(4j+2,4k+2)$ is $4j+4k+4$ if both of the core curves are non-separating, $\max\{8j+6,8k+4\}$ if only one of the core curves is separating, and $16k-8j+8$ if both of the core curves are separating.
\end{proposition}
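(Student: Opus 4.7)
The plan is to parallel the proof of Proposition~\ref{p:lift1}, treating the three separating/non-separating possibilities in turn. By the hyperelliptic double cover construction of Subsection~\ref{s:hypcomp}, with $g = j+k+2$ so that $2g = 2j+2k+4$, the component $\Qhyp(4j+2,4k+2)$ is realised as lifts of quadratic differentials in the sphere stratum $\calQ(2j, 2k, -1^{2j+2k+4})$; the $2g+2 = 2j+2k+6$ Weierstrass points on the sphere consist of the two zeros of orders $2j, 2k$ together with the $2j+2k+4$ simple poles. By the final paragraph of Section~\ref{s:meanders}, the core curves descend to a doubly-anchored open (semi-)meander when both are non-separating, to a singly-anchored open meander when exactly one is separating, and to a closed meander when both are separating. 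Moreover, the number of squares in the $[1,1]$-pillowcase cover equals twice the number of crossings of the downstairs meander, since each crossing lifts to two intersections upstairs.

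For the lower bounds, I would plug $(k_1, k_2) = (2k, 2j)$ (valid since $k \geq j \geq 0$) into the relevant minimum-crossings proposition. Proposition~\ref{p:daosm} yields at least $2j+2k+2$ crossings in the doubly-anchored case; Proposition~\ref{p:saom} yields at least $\max\{4k+2, 4j+3\}$ crossings in the singly-anchored case; and Proposition~\ref{p:closedmeander2} yields at least $8k-4j+4$ crossings in the closed case. Doubling gives the three bounds $4j+4k+4$, $\max\{8k+4, 8j+6\}$, and $16k-8j+8$, as claimed.

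For the matching upper bounds, I would realise these minima using the explicit minimal configurations from Figures~\ref{f:daproof2}, \ref{f:saproof1}--\ref{f:saproof2}, and \ref{f:closedmeanderproof8} (together with its $k_1 > k_2$ extension described in the proof of Proposition~\ref{p:closedmeander2}). Since arcs between Weierstrass points on the sphere lift to symmetric non-separating curves automatically, the only nontrivial verification is an application of Lemma~\ref{l:lift} (and its twice-punctured analogue from the subsequent remark) to check that each closed curve appearing in the chosen meander encloses an odd number of Weierstrass points, so that it lifts to a symmetric separating curve in the correct hyperelliptic component.

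The main obstacle is this parity verification. In the both-separating case, for instance, both the horizontal and the vertical closed curve of the meander from Proposition~\ref{p:closedmeander2} must partition the $2j+2k+6$ Weierstrass points into two subsets of odd cardinality. Since each bigon encloses exactly one simple pole, this reduces to a direct bigon count combined with a choice of which complementary polygon contains the zero of order $2j$ and which contains the zero of order $2k$; any placement inconsistent with the parity requirement can be corrected by a small isotopy within the planar isotopy class of the minimal configuration, which does not change the crossing count. A simpler parity check (for only the horizontal closed curve) handles the one-separating case.
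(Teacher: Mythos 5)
Your proposal follows essentially the same route as the paper's proof: descend the core curves to a doubly-anchored open semi-meander, a singly-anchored open meander, or a closed meander in $\calQ(2j,2k,-1^{2j+2k+4})$ according to the number of separating core curves, apply Propositions~\ref{p:daosm}, \ref{p:saom} and~\ref{p:closedmeander2} with $(k_{1},k_{2})=(2k,2j)$, double the crossing counts, and use Lemma~\ref{l:lift} to check that the closed curves enclose an odd number of branch points. The one caveat is your proposed ``small isotopy'' fallback for a failed parity check: which complementary region contains which zero is forced by the polygon combinatorics of the meander, so there is no such freedom to reposition the zeros --- but the parity does hold for the minimal configurations in question, so this fallback is never invoked and your argument agrees with the paper's.
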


\begin{proof}
If both of the core curves are non-separating then they descend under the hyperelliptic involution to a doubly-anchored open semi-meander in the stratum $\calQ(2j,2k,-1^{2j+2k+4})$. Proposition~\ref{p:daosm} gives us that such a doubly-anchored open semi-meander requires at least $2j+2k+2$ crossings. Therefore, after lifting, the $[1,1]$-pillowcase cover in $\Qhyp(4j+2,4k+2)$ requires at least $4j+4k+4$ squares, as claimed.

If only one of the core curves is separating, then the core curves descend to a singly-anchored open meander in the stratum $\calQ(2j,2k,-1^{2j+2k+4})$. By Proposition~\ref{p:saom}, such a singly-anchored open meander requires at least $\max\{4j+3,4k+2\}$ crossings. Therefore, the $[1,1]$-pillowcase cover in $\Qhyp(4j+2,4k+2)$ requires at least $\max\{8j+6,8k+4\}$ squares, as claimed. Note that in our constructions of minimal singly-anchored open meanders in the proof of Proposition~\ref{p:saom}, and once all the punctures have been added, the closed horizontal curve separates the the punctures into two sets of odd size and so will indeed lift two a closed curve.

Finally, if both of the core curves are separating then they will descend to a closed meander in the stratum $\calQ(2j,2k,-1^{2j+2k+4})$. By Proposition~\ref{p:closedmeander2}, such meander requires at least $8k - 4j + 4$ crossings. Therefore, the $[1,1]$-pillowcase cover in $\Qhyp(4j+2,4k+2)$ requires at least $16k - 8j + 8$ squares, as claimed. Again, it can be checked that both of the curves forming the meander separate the punctures into two sets of odd size and so do lift to closed curves.
\end{proof}

The proofs of the following three propositions are analogous.

\begin{proposition}\label{p:lift3}
Let $j\geq 1$ and $k\geq 0$ with $j\geq k$. The minimal number of squares required for a $[1,1]$-pillowcase cover in the hyperelliptic connected component $\Qhyp(4j+2,2k-1,2k-1)$ is $4j+4k+2$ if both of the core curves are non-separating, $8j+4$ if only one of the core curves is separating, and $16j-8k+12$ if both of the core curves are separating.
\end{proposition}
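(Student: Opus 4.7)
The strategy is to mirror the proof of Proposition~\ref{p:lift2} by passing through the hyperelliptic double cover. Applying the double-cover construction from Subsection~\ref{s:hypcomp} that sends $\calQ(2(g-m)-3,2m,-1^{2g+1})$ into $\calQ(2(g-m)-3,2(g-m)-3,4m+2)$, with $m=j$ and $g=j+k+1$, the component $\Qhyp(4j+2,2k-1,2k-1)$ is realised from the base stratum $\calQ(2j,2k-1,-1^{2j+2k+3})$ (which for $k=0$ collapses to $\calQ(2j,-1^{2j+4})$). The core curves of a $[1,1]$-pillowcase cover upstairs are symmetric under the hyperelliptic involution and, by the discussion at the end of Section~\ref{s:meanders}, descend through the covering map to either arcs (when non-separating) or simple closed curves (when separating), producing a meander of the appropriate type in this base stratum.

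I would then split into the three separation cases and invoke the relevant minimal-meander proposition, doubling the resulting crossing count to obtain the number of squares upstairs. When both core curves are non-separating, they descend to a doubly-anchored open semi-meander in the base stratum, and Proposition~\ref{p:daosm} gives a minimum of $2j+2k+1$ crossings for $k\geq 1$; for $k=0$ the base collapses as above and Proposition~\ref{p:daom} yields $2j+1$ crossings. In both subcases the double cover produces $4j+4k+2$ squares. When exactly one core curve is separating, the descended configuration is a singly-anchored open meander in $\calQ(2j,2k-1,-1^{2j+2k+3})$, and Proposition~\ref{p:saom} (valid for $k_{2}\geq -1$) gives a minimum of $\max\{4j+2,4k+1\}=4j+2$ crossings using $j\geq k$, so $8j+4$ squares upstairs. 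When both core curves are separating, the descended configuration is a closed meander in the same base stratum, and Proposition~\ref{p:closedmeander2} (valid for $k_{2}\geq -1$) gives $4(2j)-2(2k-1)+4 = 8j-4k+6$ crossings, so $16j-8k+12$ squares upstairs.

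The main obstacle, shared with the other lifting propositions, is verifying that the closed curves appearing in the minimal meander constructions really do lift to symmetric \emph{separating} curves upstairs. Here the $2g+2 = 2j+2k+4$ branch points of the double cover are the preimage of the zero of order $2j$ together with all $2j+2k+3$ poles, while the two odd-order punctures (the zeros of order $2k-1$ downstairs) are non-branch points. By Lemma~\ref{l:lift}, I need each such closed curve to enclose an odd number of branch points. As in the proof of Proposition~\ref{p:lift2}, I would carry this out by direct inspection of the rainbow pattern of the minimal constructions from Propositions~\ref{p:saom} and~\ref{p:closedmeander2}: the two non-branch punctures corresponding to the two zeros of order $2k-1$ can be freely placed among the complementary regions, and for each separation case a suitable placement inside the rainbows makes the branch-point count on one side of each separating closed curve odd. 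Combined with the matching lower bounds provided by the meander propositions, this yields the stated minima.
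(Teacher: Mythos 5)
Your proposal is correct and follows essentially the same route as the paper: identify the base stratum $\calQ(2j,2k-1,-1^{2j+2k+3})$, apply Propositions~\ref{p:closedmeander2}, \ref{p:saom}, \ref{p:daosm} (with the fallback to Proposition~\ref{p:daom} when $k=0$) to get $8j-4k+6$, $4j+2$, and $2j+2k+1$ crossings respectively, and double to obtain the stated square counts. Your extra care in verifying the hypotheses of Lemma~\ref{l:lift} via the branch-point count is a welcome elaboration of a step the paper dispatches with ``it can be checked.''
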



\begin{proposition}\label{p:lift4}
Let $k > j \geq 0$. The minimal number of squares required for a $[1,1]$-pillowcase cover in the hyperelliptic connected component $\Qhyp(4j+2,2k-1,2k-1)$ is $4j+4k+2$ if both of the core curves are non-separating, $8k$ if only one of the core curves is separating, and $16k-8j$ if both of the core curves are separating.
\end{proposition}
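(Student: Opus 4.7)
The plan is to adapt the proof of Proposition~\ref{p:lift3} to the complementary range $k > j \geq 0$. The relevant genus-zero stratum for the hyperelliptic double cover is the same, $\calQ(2j, 2k-1, -1^{2j+2k+3})$, and the core curves of a hyperelliptic $[1,1]$-pillowcase cover descend to either a doubly-anchored open semi-meander, a singly-anchored open meander, or a closed meander, according as $0$, $1$, or $2$ of the core curves are separating. The essential change from Proposition~\ref{p:lift3} is that $k > j$ forces $2k - 1 \geq 2j + 1 > 2j$, so when invoking the minimality propositions we take $k_{1} = 2k - 1$ and $k_{2} = 2j$, reversing the roles of the two positive-order zeros.

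I would then handle the three cases in turn. For both core curves non-separating, the descent is to a doubly-anchored open semi-meander; Proposition~\ref{p:daosm} then gives a minimum of $k_{1} + k_{2} + 2 = 2j + 2k + 1$ crossings, lifting to $4j + 4k + 2$ squares. Unlike Proposition~\ref{p:lift3}, the doubly-anchored open meander case of Proposition~\ref{p:daom} does not need to be considered separately even when $j = 0$, since the order-zero marked point must be retained on the sphere in order to match the dimension of $\Qhyp(4j+2, 2k-1, 2k-1)$. For exactly one separating core curve, Proposition~\ref{p:saom} yields $\max\{2k_{1} + 2,\, 2k_{2} + 3\} = \max\{4k,\, 4j + 3\} = 4k$ crossings (since $k > j$ gives $4k \geq 4j + 4 > 4j + 3$), hence $8k$ squares. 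For both core curves separating, Proposition~\ref{p:closedmeander2} gives $4k_{1} - 2k_{2} + 4 = 8k - 4j$ crossings, lifting to $16k - 8j$ squares.

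The main obstacle, as in Propositions~\ref{p:lift2} and~\ref{p:lift3}, is verifying that the minimal meander constructions of the previous subsections are actually realised as images of symmetric filling pairs under the hyperelliptic cover. Concretely, every simple closed curve appearing in the constructed meander must satisfy the parity condition of Lemma~\ref{l:lift}, together with its remark accommodating the two symmetric poles of the downstairs differential; equivalently, it must enclose an odd number of Weierstrass-image punctures among the $2g + 2 = 2j + 2k + 4$ total. I would check this by direct inspection of the minimal configurations shown in Figures~\ref{f:closedmeanderproof8}, \ref{f:saproof2}, and~\ref{f:daproof2}, where a routine puncture count in each case confirms the parity and so shows that the bounds obtained from the meander minimality propositions are attained upstairs.
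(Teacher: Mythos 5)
Your proposal is correct and takes essentially the same route as the paper's proof: it identifies the downstairs stratum $\calQ(2j,2k-1,-1^{2j+2k+3})$, applies Propositions~\ref{p:closedmeander2}, \ref{p:saom} and~\ref{p:daosm} with the roles of the two zeros swapped (since $2k-1>2j$) to get $8k-4j$, $4k$ and $2j+2k+1$ crossings respectively, doubles these under the cover, and verifies realisability via the parity condition of Lemma~\ref{l:lift}. Your added remark that the doubly-anchored open meander case of Proposition~\ref{p:daom} never arises here (because the order-$2j$ branch point must be retained even when $j=0$, as it lifts to the zero of order $4j+2$) is consistent with the paper, which simply omits that case in this range.
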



\begin{proposition}\label{p:lift5}
Let $k\geq 1$ and $j\geq 0$ with $k\geq j$. The minimal number of squares required for a $[1,1]$-pillowcase cover in the hyperelliptic connected component $\Qhyp(2j-1,2j-1,2k-1,2k-1)$ is $4j+4k$ if both of the core curves are non-separating, $\max\{8j+2,8k\}$ if only one of the core curves is separating, and $16k-8j+4$ if both of the core curves are separating.
\end{proposition}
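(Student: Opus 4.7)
The plan is to follow the same meander-lifting strategy used in Propositions~\ref{p:lift2}, \ref{p:lift3}, and \ref{p:lift4}. First, I would identify the genus zero stratum that covers $\Qhyp(2j-1,2j-1,2k-1,2k-1)$ via the hyperelliptic double cover. From the list in Subsection~\ref{s:hypcomp}, the relevant construction is
\[\calQ(2(g-m)-3,2m+1,-1^{2g+2})\longrightarrow\calQ(2(g-m)-3,2(g-m)-3,2m+1,2m+1),\]
and setting $2(g-m)-3=2j-1$ and $2m+1=2k-1$ gives $m=k-1$ and $g=j+k$, so the base stratum is $\calQ(2j-1,2k-1,-1^{2j+2k+2})$. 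Each of the three separation configurations for the core curves of a $[1,1]$-pillowcase cover then corresponds, under the hyperelliptic involution, to a specific type of meander in this stratum, and the number of squares upstairs is exactly twice the number of crossings downstairs.

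Next, I would apply the minimal-crossing propositions with $k_{1}=2k-1\geq k_{2}=2j-1\geq -1$ (which is legitimate because $k\geq j$ and $k\geq 1$). For both core curves separating, the image is a closed meander, and Proposition~\ref{p:closedmeander2} yields
\[4k_{1}-2k_{2}+4=4(2k-1)-2(2j-1)+4=8k-4j+2\]
crossings, so the lift has at least $16k-8j+4$ squares. For exactly one separating core curve, the image is a singly-anchored open meander, and Proposition~\ref{p:saom} gives $\max\{2k_{1}+2,\,2k_{2}+3\}=\max\{4k,\,4j+1\}$ crossings, lifting to at least $\max\{8k,\,8j+2\}$ squares. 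For both core curves non-separating with $j\geq 1$, the image is a doubly-anchored open semi-meander, and Proposition~\ref{p:daosm} gives $k_{1}+k_{2}+2=2j+2k$ crossings, lifting to $4j+4k$ squares. The edge case $j=0$ requires a moment of care: the base stratum degenerates to $\calQ(2k-1,-1^{2k+3})$ and the image becomes a doubly-anchored open meander, so Proposition~\ref{p:daom} applies with index $2k-1$, yielding $2k$ crossings and $4k=4j+4k$ squares, matching the formula.

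Finally, I would verify the hypotheses of Lemma~\ref{l:lift} for each of the closed curves appearing in the minimal meander constructions, exactly as in the previous propositions. In the both-separating construction from Proposition~\ref{p:closedmeander2}, and in the singly-anchored construction from Proposition~\ref{p:saom}, one checks that each closed curve on the sphere separates the $2j+2k+4$ marked points (the $2j+2k+2$ simple poles together with the two zeros, which are images of Weierstrass points) into two subsets of odd size, using the parity of $2j-1$ and $2k-1$. By Lemma~\ref{l:lift} (and the remark following it, relevant when a zero is a simple pole that is a branch point), each such closed curve lifts to a symmetric separating curve upstairs, confirming that the proposed covers realise the claimed separation behaviour. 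Exhibiting the explicit minimal meander in each case (already produced by the cited propositions) and taking the hyperelliptic double cover then provides matching upper bounds, completing the proof.

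I expect the main obstacle to be purely bookkeeping: ensuring the edge case $j=0$ is handled by the doubly-anchored open meander bound rather than the semi-meander bound, and double-checking the parity condition of Lemma~\ref{l:lift} for every closed curve appearing in the constructions of Propositions~\ref{p:closedmeander2} and~\ref{p:saom} at the specific parameter values $k_{1}=2k-1$, $k_{2}=2j-1$. No new ideas beyond those in Propositions~\ref{p:lift2}--\ref{p:lift4} should be required.
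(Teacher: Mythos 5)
Your proposal is correct and follows essentially the same route as the paper: reduce to the base stratum $\calQ(2j-1,2k-1,-1^{2j+2k+2})$, apply Propositions~\ref{p:closedmeander2}, \ref{p:saom}, and \ref{p:daosm} with $k_{1}=2k-1$, $k_{2}=2j-1$, double the crossing counts, and check Lemma~\ref{l:lift}. Your explicit treatment of the $j=0$ edge case via Proposition~\ref{p:daom} matches the paper's parenthetical remark (and in fact corrects a small typo there, where the paper writes ``when $k=0$'' instead of ``when $j=0$'').
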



Finally, we handle the case of $\Qhyp(2,-1,-1)$.

\begin{proposition}\label{p:lift6}
The minimal number of squares required for a $[1,1]$-pillowcase cover in the hyperelliptic connected component $\Qhyp(2,-1,-1)$ is 3 if both of the core curves are non-separating, 4 if only one of the core curves is separating, and 12 if both of the core curves are separating.
\end{proposition}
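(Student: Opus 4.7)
The plan is to mirror the pattern of Propositions~\ref{p:lift3} and~\ref{p:lift4}, handling $\Qhyp(2,-1,-1)$ as the degenerate case $j=k=0$ of $\Qhyp(4j+2,2k-1,2k-1)$ that is not covered by the hypotheses of either of those propositions. Plugging $j=k=0$ into the genus zero stratum $\calQ(2j,2k-1,-1^{2j+2k+3})$ used there gives $\calQ(0,-1^{4})$, which will be the ambient stratum for the descending meander in each case.

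For the two cases involving separating curves I would follow the same template as the earlier lifting propositions. When \emph{both core curves are separating}, the core curves descend to a closed meander in $\calQ(0,-1^{4})$; Proposition~\ref{p:closedmeander2} applied with $k_{1}=0$ and $k_{2}=-1$ gives a minimum of $4(0)-2(-1)+4=6$ crossings, which double under the cover to yield at least $12$ squares. In the \emph{mixed} case (one separating, one non-separating) the descent is a singly-anchored open meander, and Proposition~\ref{p:saom} with $k_{1}=0$ and $k_{2}=-1$ gives $\max\{2,1\}=2$ sphere crossings and hence $4$ squares. In both cases I would verify, using the twice-punctured version of Lemma~\ref{l:lift} noted in the remark after it, that the designated closed curve(s) separate the four branch points on the sphere into sets of odd size and hence lift to separating curves on the cover.

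The main obstacle is the \emph{both non-separating} case: here the naive application of Proposition~\ref{p:daom} with $k=0$ would suggest a bound of only $2$ squares, short of the correct answer $3$. The reason is that if all four arc endpoints of a doubly-anchored open meander were to sit at the four branch points, then no branch point would be left over to carry the regular marked point whose lift provides the order two zero on the cover. My plan is therefore to bypass the meander framework here and establish the sharp lower bound $n\geq 3$ directly from the Euler characteristic: any filling pair on the closed torus realising a $[1,1]$-pillowcase cover has $V=n$ four-valent intersections and $E=2n$ edges, so $V-E+F=0$ forces $F=n$, while the stratum $\calQ(2,-1,-1)$ demands at least three complementary regions (one octagon for the order two zero and two bigons for the simple poles). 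An explicit $3$-square $[1,1]$-pillowcase cover with both core curves non-separating, obtained by descending to $\calQ(0,-1^{4})$ a configuration of two arcs that share a single Weierstrass-point endpoint together with one non-branch transverse crossing, then matches this lower bound and completes the proof.
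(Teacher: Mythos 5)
Your treatment of the two cases involving a separating core curve is exactly the paper's: both are read off from Propositions~\ref{p:closedmeander2} and~\ref{p:saom} applied with $(k_{1},k_{2})=(0,-1)$ in the stratum $\calQ(0,-1^{4})$, giving $6$ and $2$ crossings and hence $12$ and $4$ squares after doubling, together with the odd-parity check from the twice-punctured version of Lemma~\ref{l:lift}. In the non-separating case you also hit the same two key observations as the paper: the doubly-anchored open meander is unavailable because its four anchors would exhaust the four branch points, leaving none for the order-zero marked point whose branched lift must supply the order-two zero; and the sharp $3$-square example comes from a doubly-anchored open semi-meander whose two arcs meet at a Weierstrass point, an intersection that is not doubled under the lift (this is precisely the configuration of Figure~\ref{f:lift6}). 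Where you genuinely diverge is the lower bound $n\geq 3$: the paper stays inside the meander framework (the descent must be a doubly-anchored open semi-meander, and the two-crossing minimum from Proposition~\ref{p:daosm} is then improved only by placing one crossing at an anchor), whereas you count faces directly, using $V=n$, $E=2n$, hence $F=n$ on the torus, and the fact that $\calQ(2,-1,-1)$ forces at least three complementary regions (one octagon and two bigons). Your Euler-characteristic argument is clean and self-contained: it applies to any $[1,1]$-pillowcase cover in $\calQ(2,-1,-1)$, hyperelliptic or not, and it makes explicit why $2$ squares is impossible, a point the paper's proof leaves implicit. Both routes are correct; just make sure the explicit $3$-square configuration is drawn out carefully enough to confirm that it really produces a single horizontal and a single vertical cylinder upstairs, which is the one detail your sketch defers to a figure.
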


\begin{proof}
The doubly-anchored open meander in $\calQ(0,-1^{4})$ given by Proposition~\ref{p:daom} cannot be lifted to the stratum $\Qhyp(2,-1,-1)$ as all of the poles, being anchors on the sphere, are images of Weierstrass points. The doubly-anchored open semi-meander given by Proposition~\ref{p:daosm} uses two crossings and lifts to a $[1,1]$-pillowcase cover with 4 squares. However, there is a doubly-anchored open semi-meander in $\calQ(0,-1^{4})$, as shown in Figure~\ref{f:lift6}, that reuses an anchor for both the vertical and horizontal arc. This anchor crossing is not doubled under the lifting procedure as it is the image of a Weierstrass point. Hence the minimal number of squares required for a $[1,1]$-pillowcase cover in the hyperelliptic component $\Qhyp(2,-1,-1)$ is 3. The remaining cases follow as above. Proposition~\ref{p:saom} requires a minimum of 2 crossings for a singly-anchored open meander in $\calQ(0,-1^{4})$ which lifts to a $[1,1]$-pillowcase cover in $\Qhyp(2,-1,-1)$ with at least 4 squares (one core curve separating), and Proposition~\ref{p:closedmeander2} requires at least $6$ crossings for a closed meander in $\calQ(0,-1^{4})$ which lifts to a $[1,1]$-pillowcase cover in $\Qhyp(2,-1,-1)$ with at least 12 squares and both core curves being separating.
\end{proof}

\begin{figure}[t]
\begin{center}
\includegraphics[scale=0.9]{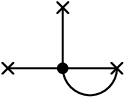}
\end{center}
\caption{A doubly-anchored semi-meander in $\calQ(0,-1^{4})$ using 2 crossings one of which is located at an anchor.}
\label{f:lift6}
\end{figure}

We see that the values in the above propositions are exactly those stated in Table~\ref{tab:theorem} of Theorem~\ref{th:main}. Hence, the proof of Theorem~\ref{th:main} is complete.

\section{Ratio-optimisers in the Johnson filtration}\label{s:ROs}

In this section, we prove Theorem~\ref{th:RO}.

\subsection{Ratio-optimising pseudo-Anosovs}

Recall that the {\em \teichmuller space $\T(S)$} of a closed surface $S$ of genus $g\geq 2$ is the set of equivalence classes of pairs $(X,\varphi)$, where $X$ is a hyperbolic surface of genus $g$ and $\varphi:S\to X$ is a homeomorphism. Two pairs $(X_{1},\varphi_{1})$ and $(X_{2},\varphi_{2})$ are equivalent if the change of marking map $\varphi_{2}\circ\varphi_{1}^{-1}$ is isotopic to an isometry. As such, the length of an isotopy class of a simple closed curve for a given point $[(X,\varphi)]\in\T(S)$ is well-defined. \teichmuller space carries a metric $d_{\T}$ called the \teichmuller metric and we denote by $\T(S)$ the metric space $(\T(S),d_{\T})$. Given a pseudo-Anosov homeomorphism $f$, we define the {\em translation length of $f$ on $\T(S)$} to be $\ell_{\T}(f):=\frac{1}{2}\log(K_{f})$, where $K_{f}$ is the dilatation of $f$.

The {\em curve graph $\C(S)$} of the surface $S$, introduced by Harvey~\cite{H}, is defined to be the graph whose vertices are isotopy classes of essential simple closed curves on the surface $S$, with two vertices joined by an edge if and only if they can be realised disjointly on $S$. Assigning length 1 to each edge, we equip $\C(S)$ with the associated path metric $d_{\C}$ and denote by $\C(S)$ the metric space $(\C(S),d_{\C})$. Given a pseudo-Anosov homeomorphism $f$, we define the {\em asymptotic translation length of $f$ on $\C(S)$} to be
\[\ell_{\C}(f):=\liminf_{n\to \infty}\frac{d_{\C}(f^{n}(\bs\alpha),\bs\alpha)}{n},\]
for any $\bs\alpha\in\C^{0}(S)$, which for a pseudo-Anosov is a strictly positive limit (see the works of Bowditch~\cite{Bow} and Masur-Minsky~\cite{MM}).

The $\SO(2,\R)\!\setminus\SL(2,\R)$-orbit of a quadratic differential $(X,q)$ gives rise to an embedding of $\Hbb$ into $\T(S)$. We call the image of this embedding the {\em \teichmuller disk} of the quadratic differential. See the work of Herrlich-Schmith{\"u}sen~\cite{HS} for more details.

The {\em systole map} $\sys: \T(S)\to \C(S)$ is defined to be the map that sends a point $x\in\T(S)$ to the isotopy class of the curve with shortest length in the hyperbolic metric determined by $x$ -- the systole. Note that this map is only coarsely well-defined. Indeed, there is not necessarily a unique systole on a given surface. However, the isotopy classes of all of the systoles on a surface form a set in the curve graph of diameter at most two. The study of this map played a key role in the work of Masur-Minsky in which they proved that the curve complex is $\delta$-hyperbolic \cite{MM}. They showed in particular that the map is coarsely $K$-Lipschitz. That is, there exist $K>0$ and $C\geq 0$ such that
\[d_{\C}(\sys(x),\sys(y))\leq K\cdot d_{\T}(x,y)+C,\]
for all $x,y\in\T(S)$.

It is natural to ask what is the optimum Lipschitz constant, $\kappa_{g}$, defined by
\[\kappa_{g}:=\inf\{K>0\,|\,\exists\,C\geq 0\,\text{such that sys is coarsely K-Lipschitz}\},\]
and Gadre-Hironaka-Kent-Leininger determined that the ratio of $\kappa_{g}$ to $1/\log(g)$ is bounded from above and below by two positive constants {\cite[Theorem 1.1]{GHKL}}. To find an upper bound for $\kappa_{g}$, Gadre-Hironaka-Kent-Leininger gave a careful version of the proof of Masur-Minsky that $\sys$ is coarsely Lipschitz. They then constructed pseudo-Anosov homeomorphisms for which the ratio $\ell_{\C}(f)/\ell_{\T}(f)\asymp 1/\log(g)$, where $\ell_{\C}(f)$ and $\ell_{\T}(f)$ are the asymptotic translation lengths of $f$ in $\C(S)$ and $\T(S)$, respectively. A lower bound for $\kappa_{g}$ then followed by noting that, for any pseudo-Anosov homeomorphism $f$, we have
\[\kappa_{g}\geq\frac{\ell_{\C}(f)}{\ell_{\T}(f)}.\]
Pseudo-Anosov homeomorphisms that maximise this ratio are said to be {\em ratio-optimising}.

\subsection{The Johnson filtration}\label{ss:Jfilt}

If $S$ is a surface of genus $g\geq 2$ with $p \leq 1$ punctures we define the mapping class group of $S$, denoted $\Mod(S)$, to be the group of orientation-preserving self-homeomorphisms of $S$ fixing the punctures pointwise up to isotopy relative to the punctures. Let $\Gamma = \pi_{1}(S)$ and let $\Gamma_{i}$ be the $i^{th}$ term of its lower central series. So $\Gamma_{1} = \Gamma$ and, for $i\geq 1$, $\Gamma_{i+1} = [\Gamma,\Gamma_{i}]$. The action of $\Mod(S)$ on $\Gamma$ preserves $\Gamma_{i}$ and so there is a well-defined action of $\Mod(S)$ on $\Gamma/\Gamma_{i}$. Johnson~\cite{Jo1} defined a filtration of the mapping class group, now called the {\em Johnson filtration}, where the $i^{th}$ term of the filtration denoted by $\mathcal{I}^{i}(S)$ is the kernel of the action of $\Mod(S)$ on $\Gamma/\Gamma_{i+1}$. In particular, $\mathcal{I}^{0}(S)=\Mod(S)$, $\mathcal{I}^{1}(S)=\mathcal{I}(S)$ is the well-studied Torelli group, and $\mathcal{I}^{2}(S)=\mathcal{K}(S)$ is the Johnson kernel. The textbook of Farb-Margalit~\cite[Chapter 6]{FM} contains more details on this filtration.

\subsection{The Aougab-Taylor construction}

Using a Thurston construction on filling pairs, Aougab-Taylor~\cite{AT} constructed a large family of pseudo-Anosov homeomorphisms for which $\tau(f) := \ell_{\T}(f)/\ell_{\C}(f)$ was bounded above by a function $F(g)\asymp\log(g)$ {\cite[Theorem 1.1]{AT}}. Moreover, they proved that there exists a \teichmuller disk $\D\cong\Hbb\subset\T(S)$ such that there are infinitely many conjugacy classes of primitive ratio-optimising pseudo-Anosovs $f$ with the invariant axis of $f$ being contained in $\D$. Recall that a group element $g$ of a group $G$ is said to be {\em primitive} if there does not exist a $h\in G$ such that $g = h^{k}$ for $|k|>1$.

When both of the curves in the filling pair are separating curves with geometric intersection number bounded polynomially in the genus, Aougab-Taylor showed that there exist ratio-optimising pseudo-Anosov homeomorphisms lying arbitrarily deep in the Johnson filtration of the mapping class group of the surface~{\cite[Theorem 5.1]{AT}}.

It is natural to ask which connected components of the moduli space of quadratic differentials contain quadratic differentials whose \teichmuller disks are stabilised by such ratio-optimising pseudo-Anosovs lying arbitrarily deep in the \linebreak Johnson filtration of the mapping class group. This was asked by the author in his thesis~\cite[Question 7.4]{J0}.

Theorem~\ref{th:RO} answers this question in the affirmative for hyperelliptic connected components.

\subsection{Proof of Theorem~\ref{th:RO}} 

From the discussion above, we see that it is sufficient to prove that all of the separating filling pairs we obtain from our hyperelliptic $[1,1]$-pillowcase covers have intersection numbers polynomial in the genus of the surfaces. Equivalently, we must show that the hyperelliptic $[1,1]$-pillowcase covers with both core curves being separating are constructed from a number of squares that is polynomial in the genus. This follows from Theorem~\ref{th:main}. Indeed, we see that the greatest number of squares required for such a pillowcase cover in any hyperelliptic connected component of a fixed genus $g$ is the $16g+4$ squares required for $\Qhyp(2g-1,2g-1,-1,-1)$.

\begin{remark}
We have restricted to the case of connected components with no poles as the definition of a Johnson filtration is more subtle for surfaces with at least two punctures. If one uses the partitioned surface definition of the Torelli group considered by Putman~\cite{Put}, then it can be checked that our ratio-optimising pseudo-Anosovs lie in this Torelli group for either of the two possible definitions for a twice-punctured surface. Here there are two partitions of the punctures $\{\{p_{1},p_{2}\}\}$ and $\{\{p_{1}\},\{p_{2}\}\}$. It can be checked that the separating filling pairs obtained from the $[1,1]$-pillowcase covers in $\Qhyp(4g-2,-1,-1)$ or $\Qhyp(2g-1,2g-1,-1,-1)$ are $P$-separating in the sense of Putman for either choice of partition $P$. Since Putman shows that the Torelli group in this case is generated by $P$-separating twists and $P$-bounding pair maps, we see that our ratio-optimisers do indeed lie in the Torelli group for this definition. Putman does not make a partitioned surface definition of the full filtration.
\end{remark}

\section{Other minimal meanders}\label{s:othermeanders}

Here, we discuss minimal closed meanders in more general strata. The method we will use comes from the author's thesis. The new results above allow us to establish more concrete questions for the general case.

\begin{figure}[b]
\begin{center}
\includegraphics[scale=1]{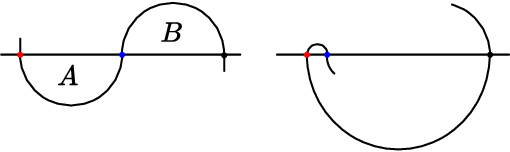}
\end{center}
\caption{The adjacent bigons, and maximal arc with adjacent bigons configuration.}
\label{f:adjacentbigons}
\end{figure}

\begin{figure}[t]
\begin{center}
\includegraphics[scale=1]{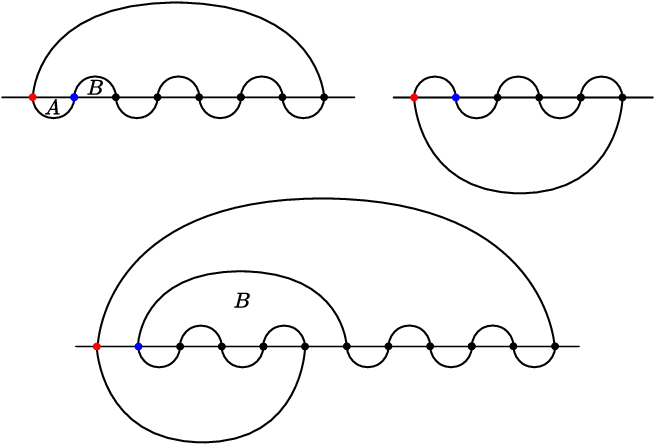}
\end{center}
\caption{Joining a meander in $Q(1^{2},-1^{6})$ and a meander in $Q(2^{2},-1^{8})$ to obtain a meander in $Q(2^{2},1^{2},-1^{10})$.}
\label{f:meanderjoin}
\end{figure}

Given a meander with two adjacent bigons as in the left of Figure~\ref{f:adjacentbigons} and a second meander that has a maximal arc which is immediately adjacent to a bigon as in the right of Figure~\ref{f:adjacentbigons} (note that this is also two adjacent bigons up to cyclic rotation), we can `glue' the second meander into the first joining the red vertices and blue vertices so that the second meander is contained inside bigon $B$ of the first and bigon $A$ of the first is replaced by the maximal arc of the second. If the first meander is in $\calQ(k_{1},\ldots,k_{n},-1^{\kappa+4})$, $\kappa=\sum k_{i}$, and the second is in $\calQ(l_{1},\ldots,l_{m},-1^{\lambda+4})$, $\lambda=\sum l_{i}$, then the resulting meander lies in $\calQ(k_{1},\ldots,k_{n},l_{1},\ldots,l_{m},-1^{\kappa+\lambda+4})$. For example, we can combine a meander in $Q(1^{2},-1^{6})$ and a meander in $Q(2^{2},-1^{8})$ to obtain a meander in $Q(2^{2},1^{2},-1^{10})$, as shown in Figure~\ref{f:meanderjoin}.

Performing this process on minimal meanders in $Q(k^{2},-1^{2k+4})$ (as we did in Figure~\ref{f:meanderjoin}) does not form any zeros of order 0 (i.e., no 4-gons are formed). As such, we see that this process iterates to produce minimal meanders in the strata $Q(k_{1}^{2},\ldots,k_{n}^{2},-1^{\kappa+4})$, $\kappa = \sum_{i=1}^{n}2 k_{i}$. That is, we have the following proposition (this was stated without proof as {\cite[Corollary 6.24]{J0}}).

\begin{proposition}\label{p:symm}
A closed meander in $Q(k_{1}^{2},\ldots,k_{n}^{2},-1^{\kappa+4})$, $\kappa = \sum_{i=1}^{n}2 k_{i}$, requires at least $\kappa + 2n + 2$ crossings.
\end{proposition}

\begin{proof}
As noted above, the gluing procedure does not add any zeros of order zero (4-gons) that are not already contained in the two meanders. So if there are no zeros of order zero in either of the meanders (i.e., they are initially minimal), then the resulting meander is also minimal since it will also contain no zeros of order zero. We observe that we can build a meander in $Q(k_{1}^{2},-1^{2k_{1}+4})$ using $2k_{1}+4$ crossings. To this, we can glue in a meander in $Q(k_{2}^{2},-1^{2k_{2}+4})$ with $2k_{2}+4$ crossings. The resulting meander has $2k_{1}+2k_{2}+4+4-2 = 2k_{1}+2k_{2}+6$ crossings. After this, we still have adjacent bigons and a maximal arc that can be used in the above combination process. We can continue gluing in meanders in $Q(k_{i}^{2},-1^{2k_{i}+4}$ until we reach $i = n$. At each stage, we add $2k_{i}+2$ crossings. Hence, in total we have $2k_{1}+4 + \sum_{i = 2}^{n}(2k_{i}+2) = \kappa +2n +2$ crossings, as claimed.
\end{proof}

A similar argument, using the minimal meanders in $Q(k,-1^{k+4})$ constructed in Proposition~\ref{p:closedmeander1}, allows us to prove the following proposition (this was stated without proof as {\cite[Corollary 6.22]{J0}}).

\begin{proposition}
The minimum number of crossings for a closed meander in the stratum $\calQ(k_{1},\ldots,k_{n},-1^{\kappa+4})$, with $\kappa=\sum k_{i}$, is less than or equal to $4\kappa+2n+2$ crossings.
\end{proposition}

\begin{proof}
The proof is similar to the previous proposition. We repeatedly glue in meanders in $Q(k_{i},-1^{k_{i}+4})$ each of which uses $4k_{i}+4$ crossings. We end with a meander that uses $4k_{1}+4+\sum_{i = 2}^{n}(4k_{i}+2) = 4\kappa+2n+2$ crossings.
\end{proof}

Finally, we observe that in all of the closed meanders we have seen in this work, the number of crossings increases as the difference between the orders of zeros increases. Indeed, for the stratum $Q(k_{1}^{2},\ldots,k_{n}^{2},-1^{\kappa+4})$ the zeros can be paired up with an equal order zero, and we notice that this stratum can be obtained minimally with no additional zeros of order zero. Whereas the stratum $Q(k,-1^{k+4})$ has only one zero which cannot therefore be paired up with a second zero and a minimal meander here required the addition of many zeros of order zero. Hence, we ask the following question:

\begin{question}
For the stratum $\calQ(k_{1},\ldots,k_{n},-1^{\kappa+4})$, with $\kappa=\sum k_{i}$, is the minimal number of crossings for a closed meander explicitly related to the smallest sum of the differences of orders over all possible pairings of the zeros?
\end{question}

If we let this minimal difference be $\sigma$, then the formula $\kappa+2\left\lceil\frac{n}{2}\right\rceil+2+3\sigma$ seems to work for the strata in this paper. Indeed, in $Q(k,-1^{k+4})$ we have $\sigma = k$ and $\kappa = k$ so that $\kappa + 2\left\lceil\frac{n}{2}\right\rceil + 2 + 3\sigma = k + 2 + 2 + 3k = 4k + 4$, the minimum number of crossings. Similarly, for $Q(k_{1},k_{2},-1^{k_{1}+k_{2}+4})$, $k_{1}\geq k_{2}$, we have $\sigma = k_{1}-k_{2}$ and $\kappa = k_{1}+k_{2}$ so that $\kappa + 2\left\lceil\frac{n}{2}\right\rceil + 2 + 3\sigma = k_{1}+k_{2} + 2 + 2 + 3(k_{1}-k_{2}) = 4k_{1}-2k_{2} + 4$ which is again the minimum. Finally, this formula agrees with the result of Proposition~\ref{p:symm} where $\sigma = 0$ by pairing the zeros of order $k_{i}$. How to define $\sigma$ for more complicated strata and what the resulting formula should be is not clear.

\end{document}